\numberwithin{equation}{section}
\numberwithin{equation}{section}
\newtheorem{theorem}{Theorem}
\newtheorem{remark}{Remark}
\newtheorem{lemma}{Lemma}
\begin{document}
	\title{On postcritical sets of quadratic polynomials with a neutral fixed point}
	\author{Hongyu Qu}
	\address[Hongyu Qu]{School of Science, Beijing University of Posts and Telecommunications, Beijing
		100786, P. R. China.}
	\address[Hongyu Qu]{Key Laboratory of Mathematics and Information Networks (Beijing University of Posts and Telecommunications), Ministry of Education, China.}
	%\renewcommand{\thefootnote}{\fnsymbol{footnote}}
	%\footnotetext[1]{Corresponding author, Email: hongyuqu2022@126.com}
	%\footnotetext[2]{The research work was supported by
		%	the National Natural Science Foundation of China (12301102 and 12171264) and the Fundamental Research Funds for the Central Universities. The research work was also supported by Key Laboratory of Mathematics and Information Networks (Beijing University of Posts and Telecommunications), Ministry of Education, China.}
	
	\maketitle
	\begin{abstract}
		The control of postcritical sets of quadratic polynomials with a neutral fixed point is a main ingredient in the remarkable work of Buff and Ch\'eritat
		to construct quadratic Julia sets with positive area. Based on the Inou-Shishikura theory, they obtained the control for the case of rotation numbers of bounded high type. Later, Cheraghi developed several elaborate analytic techniques based on Inou-Shishikura’s results and obtained the control for the case of rotation numbers of high type.
		In this paper, based on the pseudo-Siegel disk theory of Dudko and Lyubich, we obtained the control for the general case.
	\end{abstract}
	
	\section{introduction}
	Let the quadratic polynomial
	$$P_{\alpha}(z)=e^{2\pi i\alpha}z+z^2,$$
	where $0<\alpha<1$ is an irrational number with continued fraction expansion
	$$\alpha=[a_1,a_2,\cdots]=\frac{1}{a_1+\frac{1}{a_2+\ddots}}.$$
	According to \cite{Br} and \cite{Yoc95},
	$P_{\alpha}$ has a Siegel disk centering at $0$
	if and only if $\alpha$ is a Brjuno number.
	We denote by $\Delta_{\alpha}$ the Siegel disk of $P_{\alpha}$, in particular,
	if $\alpha$ is not a Brjuno number, $\Delta_{\alpha}=\{0\}$, and
	denote by $\mathcal{O}_{P_{\alpha}}$ the postcritical set
	of $P_{\alpha}$, that is
	$$\mathcal{O}_{P_{\alpha}}=\overline{\cup_{n=1}^{+\infty}\{P_{\alpha}^{\comp n}(c_0)\}}$$
	with $c_0:=-\frac{e^{2\pi i\alpha}}{2}$.
	
	The main purpose of this paper is to explore how the postcritical set $\mathcal{O}_{P_{\alpha}}$ depend on the irrational number $\alpha$. It is well known that
	the topological and geometric structure of $\mathcal{O}_{P_{\alpha}}$ itself depend on the arithmetic nature of $\alpha$ in a delicate fashion.
	For example,
	%In the past decades, the topology and geometry structure of $\mathcal{O}_{P_{\alpha}}$ had been studied sufficiently for some special types $\alpha$.
	due to the Douady-Ghys surgery, when $\alpha$ is of bounded type, $\mathcal{O}_{P_{\alpha}}$ is a quasicircle;
	due to the trans-quasiconformal surgery by Peterson and Zakeri, $\mathcal{O}_{P_{\alpha}}$ is a David circle and has zero area for almost everywhere $\alpha$ (see \cite{PZ});
	due to the theory of Inou-Shishikura, when $\alpha$ is of high type, (that is, all $a_j$ is greater than or equal to some sufficiently large positive integer ${\bf N}$), it was proved that (see \cite{CD},\cite{CD13},\cite{CD22},\cite{SY})
	\begin{itemize}
		\item if $\alpha$ is a Herman number, then $\mathcal{O}_{P_{\alpha}}$ is a Jordan curve and has zero area,
		\item if $\alpha$ is a Brjuno but not a Herman number, then $\mathcal{O}_{P_{\alpha}}$ is a one-side hairy Jordan curve and has zero area,
		\item if $\alpha$ is not a Brjuno number, then $\mathcal{O}_{P_{\alpha}}$ is a Cantor bouquet and has zero area.
	\end{itemize}
	Our main result is to prove that $\alpha\mapsto\mathcal{O}_{P_{\alpha}}$ has the following upper semi-continuous property:
	\begin{theorem}
		\label{T1}Given an irrational number $0<\alpha<1$, we let $\{\alpha_n\}_{n\geq1}$ be a sequence of irrational numbers between $0$ and $1$ with $\lim_{n\to+\infty}\alpha_n=\alpha$. Then for any $\epsilon>0$ and large enough $n$,
		the postcritical set $\mathcal{O}_{P_{\alpha_n}}$ is contained in the $\epsilon$-neighborhood of
		$\mathcal{O}_{P_{\alpha}}\cup\Delta_\alpha$, and hence the union $\mathcal{O}_{P_{\alpha_n}}\cup\Delta_{\alpha_n}$
		is also contained in the $\epsilon$-neighborhood of $\mathcal{O}_{P_{\alpha}}\cup\Delta_\alpha$.
	\end{theorem}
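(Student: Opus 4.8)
The plan is to convert the statement into a uniform control of the critical orbit and then feed this into the renormalization structure attached to the pseudo-Siegel disk. Since being contained in an $\epsilon$-neighbourhood of a set is insensitive to passing to the closure, it suffices to produce, for each $\epsilon>0$, a neighbourhood $V$ of $\alpha$ in $(0,1)$ such that for every irrational $\beta\in V$ and every $k\ge 1$ one has $P_{\beta}^{\comp k}(c_{0}(\beta))\in \mathcal N_{\epsilon}(\mathcal O_{P_{\alpha}}\cup\Delta_{\alpha})$, where $c_{0}(\beta)=-e^{2\pi i\beta}/2$ and $\mathcal N_{\epsilon}(\cdot)$ denotes the Euclidean $\epsilon$-neighbourhood. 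Two soft reductions are available immediately. First, the critical orbit of $P_{\beta}$ lies in the (connected) filled Julia set $K_{\beta}$, and $\beta\mapsto K_{\beta}$ is upper semicontinuous, so after shrinking $V$ all the orbits under consideration lie in the fixed compact set $\mathcal N_{1}(K_{\alpha})$; in particular it is enough to rule out accumulation of the points $P_{\beta}^{\comp k}(c_{0}(\beta))$ outside $\mathcal N_{\epsilon}(\mathcal O_{P_{\alpha}}\cup\Delta_{\alpha})$. Second, for any fixed $M$ the maps $\beta\mapsto P_{\beta}^{\comp k}(c_{0}(\beta))$, $1\le k\le M$, converge uniformly (indeed analytically) to $P_{\alpha}^{\comp k}(c_{0}(\alpha))\in\mathcal O_{P_{\alpha}}$, so for $\beta$ close to $\alpha$ the initial segment of length $M$ of the critical orbit of $P_{\beta}$ already lies in $\mathcal N_{\epsilon}(\mathcal O_{P_{\alpha}})$. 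The whole difficulty is therefore the tail $k>M$, uniformly in $\beta$.

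For the tail one invokes the Dudko--Lyubich pseudo-Siegel disk $\Lambda_{\beta}$, available for \emph{every} irrational $\beta$ with no arithmetic restriction, together with the renormalization tower $\mathcal R^{m}P_{\beta}$, $m\ge 0$, that it carries --- this is what replaces, in the general case, the Inou--Shishikura / near-parabolic tower used by Buff--Ch\'eritat and Cheraghi in the bounded and high type cases. The properties to be extracted are, roughly: (i) $\overline{\Delta_{\beta}}\subseteq\overline{\Lambda_{\beta}}$ and $\partial\Lambda_{\beta}\subseteq\mathcal O_{P_{\beta}}$ (the latter in the spirit of Ma\~n\'e's theorem), so that $\overline{\Lambda_{\beta}}$ is squeezed between $\mathcal O_{P_{\beta}}$ and its filled hull; (ii) after finitely many iterates the critical orbit descends into the tower, in the sense that for each $m$ the part of the orbit not yet ``captured at level $m$'' is confined to a finite union of univalent pull-backs of the level-$m$ renormalization domain, a union of diameter at most some $\delta_{m}(\beta)$; (iii) uniform a priori bounds, namely $\delta_{m}(\beta)\to 0$ as $m\to\infty$ uniformly for $\beta$ in a neighbourhood of $\alpha$. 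Granting this, fix $m$ with $\sup_{\beta\in V}\delta_{m}(\beta)<\epsilon/2$: every sufficiently deep iterate of the critical orbit of $P_{\beta}$ then lies within $\epsilon/2$ of a point that is captured at level $m$, that is, of a point lying on the level-$m$ pseudo-Siegel circle $\partial\Lambda_{\beta}^{(m)}$ or inside the genuine Siegel disk $\Delta_{\beta}$ when $\beta$ is Brjuno, transported back into the dynamical plane by the finitely many transfer maps of levels $1,\dots,m$.

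It remains to match the level $\le m$ data of $P_{\beta}$ with those of $P_{\alpha}$ once $\beta$ is close enough to $\alpha$. At each fixed level the renormalization operator and the transfer maps depend continuously on the parameter, and the combinatorial data of the first $m$ levels (the entries $a_{1},\dots,a_{m}$ together with the relevant branch choices) are locally constant in $\beta$ off a discrete set; hence for $\beta$ in a small enough neighbourhood of $\alpha$ the compact pieces $\partial\Lambda_{\beta}^{(m)}$, the Siegel disk of level $m$, and the transfer maps spreading them into the dynamical plane are all uniformly close to those of $P_{\alpha}$. Since the spread-around level-$m$ pseudo-Siegel circle of $P_{\alpha}$ is contained in $\mathcal O_{P_{\alpha}}$, and the spread-around level-$m$ Siegel disk of $P_{\alpha}$ (when $\alpha$ is Brjuno) is contained in $\Delta_{\alpha}$, we conclude that the deep part of the critical orbit of $P_{\beta}$ lies in $\mathcal N_{\epsilon}(\mathcal O_{P_{\alpha}}\cup\Delta_{\alpha})$; combined with the initial-segment estimate this yields $\mathcal O_{P_{\alpha_{n}}}\subseteq\mathcal N_{\epsilon}(\mathcal O_{P_{\alpha}}\cup\Delta_{\alpha})$ for large $n$. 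Finally, the stronger inclusion $\mathcal O_{P_{\alpha_{n}}}\cup\Delta_{\alpha_{n}}\subseteq\mathcal N_{\epsilon}(\mathcal O_{P_{\alpha}}\cup\Delta_{\alpha})$ follows by a short topological argument: $\partial\Delta_{\alpha_{n}}\subseteq\mathcal O_{P_{\alpha_{n}}}$ lies in $\mathcal N_{\epsilon}(\mathcal O_{P_{\alpha}}\cup\Delta_{\alpha})$, the complement of the compact set $\mathcal O_{P_{\alpha}}\cup\Delta_{\alpha}$ is connected, and $\overline{\Delta_{\alpha_{n}}}\subseteq\overline{\Lambda_{\alpha_{n}}}$, the latter being placed in $\mathcal N_{\epsilon}(\mathcal O_{P_{\alpha}}\cup\Delta_{\alpha})$ by the same analysis, which forces the Jordan domain $\Delta_{\alpha_{n}}$ into that neighbourhood as well.

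\textbf{Main obstacle.} The soft parts --- the upper semicontinuity of filled Julia sets, the initial-segment convergence, and the parameter-continuity at each \emph{fixed} level --- are routine. The crux is item (iii): the uniform-in-$\beta$ smallness $\delta_{m}(\beta)\to 0$ of the deep levels of the tower. This is exactly where the Dudko--Lyubich pseudo-Siegel disk theory must be used quantitatively, providing uniform a priori bounds and a uniform shrinking of the nested renormalization pieces for a whole family of quadratics at once, including parameters $\beta$ at which the Siegel disk degenerates or for which consecutive continued-fraction entries are small --- precisely the regime unreachable by the Inou--Shishikura-based arguments. A secondary, bookkeeping-type difficulty is to track the discretely varying combinatorics in the matching step so that the towers of $P_{\beta}$ and $P_{\alpha}$ genuinely agree up to a level that tends to infinity as $\beta\to\alpha$.
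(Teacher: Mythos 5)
Your proposal correctly identifies the Dudko--Lyubich pseudo-Siegel disk theory as the replacement for Inou--Shishikura and correctly isolates the central difficulty (controlling the tail of the critical orbit uniformly in the parameter), but the mechanism you envisage for overcoming it --- a quantitative, uniform-in-$\beta$ shrinking $\delta_{m}(\beta)\to 0$ of the renormalization pieces, item (iii), which you explicitly leave unproved --- is not what the paper establishes, and I do not believe it is available off the shelf. The paper instead extracts from the a priori bounds a \emph{uniform quasicircle} estimate: it proves (Lemma \ref{L1}, via Lemmas \ref{l0}, \ref{l12}, \ref{l11}) that each arc of $\partial\hat{\Delta}^{m}$ lying over a diffeo-tile, closed up by a hyperbolic geodesic, is a $\mathbf{K}$-quasicircle for an absolute $\mathbf{K}$. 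This compactness, rather than quantitative shrinking, is the input: if a sequence of such arcs fails to collapse, it subconverges to a nondegenerate $\mathbf{K}$-quasicircle bounding a Jordan domain $\mathcal{D}_{\Gamma}$ on which, by the injectivity of $P^{\circ q_{n+1}}$ on pseudo-Siegel disks and Hurwitz, \emph{every} iterate of $P_{\alpha}$ is univalent; a four-case analysis (Julia set, strict preimage of the Siegel disk, basin of infinity, Siegel disk) then rules out everything except collapse onto $\mathcal{O}_{P_{\alpha}}\cup\Delta_{\alpha}$. This qualitative contradiction argument is precisely what circumvents the need for the uniform $\delta_{m}\to 0$ you flag as the obstacle.

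A few more concrete gaps. (1) Your item (i), $\partial\Lambda_{\beta}\subseteq\mathcal{O}_{P_{\beta}}$, is false for the pseudo-Siegel disk: its boundary contains the artificial geodesics $\tau_{n}(I_{n})$ in $\hat{\mathbb{C}}\setminus\overline{\Delta_{\beta}}$, which are nowhere near the postcritical set; the paper must work to show these geodesics collapse, it cannot assume they are already inside $\mathcal{O}_{P_{\beta}}$. (2) You assert the pseudo-Siegel disk is available for every irrational $\beta$ with no arithmetic restriction; the paper's Section 2.3 sets up the theory for eventually golden-mean rotation numbers only, and the proof crucially uses the truncation $\alpha_{n}^{(N)}$ (replacing continued-fraction entries beyond level $N$ by $1$) to land in that setting while preserving the first $q_{m+1}$ critical-orbit points and the first $m+1$ entries; your proposal omits this device. (3) Your final ``short topological argument'' treats the connectedness of $\mathbb{C}\setminus(\mathcal{O}_{P_{\alpha}}\cup\Delta_{\alpha})$ as obvious; this is exactly the paper's Step 2, proved by contradiction via the injectivity of $P_{\alpha}$ on $\mathcal{O}_{P_{\alpha}}$ (from Dudko--Lyubich's Theorem 11.2), and is not automatic --- there could a priori be a bounded Fatou component glued along the full $\partial\Delta_{\alpha}$. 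Even granting connectedness, pushing $\overline{\Delta_{\alpha_{n}}}$ into $\mathcal{N}_{\epsilon}$ requires the intermediate topological work of the paper's Step 1 on neighborhoods.
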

	Partial results of Theorem \ref{T1} were obtained in papers \cite{BC} and \cite{CD} based on the Inou-Shishikura theory\cite{IS}, where
	\cite{BC} for $\alpha, \alpha_n$ of bounded high type and \cite{CD} for $\alpha, \alpha_n$ of high type. In \cite{BC}, the result for the bounded type by Buff and Ch\'eritat is one of key points to construct quadratic Julia sets with positive area. Then by Buff and Ch\'eritat's method, Theorem \ref{T1} can help us construct more different types of quadratic Julia sets with positive area. The proof of Theorem \ref{T1} in this paper is based on the pseudo-Siegel disk theory of Dudko and Lyubich in \cite{DL}.
	
	\vspace{0.4cm}
	$\quad${\bf Acknowledgements.}\quad The author would like to thank Professor Gaofei Zhang for recommending and discussing the important paper \cite{DL} by Dudko and Lyubich. The research work was supported by
	the National Natural Science Foundation of China (12301102 and 12171264) and the Fundamental Research Funds for the Central Universities.
	
	\section{The conformal geometry and pseudo-Siegel disks}
	This section is mainly devoted to proving a key lemma (Lemma \ref{L1}) for the proof of Theorem \ref{T1} based on the theory of Dudko and Lyubich on pseudo-Siegel disks.
	%Many notations of this section are still as Dudko and Lyubich's convinence.
	\subsection{The extremal width of a family of curves.}
	Given a family of curves $\mathcal{F}$ in $\hat{\mathbb{C}}$, we denote by $\mathcal{W}(\mathcal{F})$ the extremal width of $\mathcal{F}$, that is the reciprocal of the extremal length of $\mathcal{F}$. A family of curves $\mathcal{G}$ is said to overflow $\mathcal{F}$ if every curve in $\mathcal{G}$ contains a subcurve which belongs to $\mathcal{F}$.
	If $\mathcal{G}$ overflows $\mathcal{F}$, then the extremal widths have the following relation:
	$$\mathcal{W}(\mathcal{G})\leq\mathcal{W}(\mathcal{F}).$$
	%By a lamination $\mathcal{L}$ we mean a family of pairwise disjoint simple rectifiable arcs such that supp $\mathcal{L}$ is measurable.
	By a (topological) rectangle in $\hat{\mathbb{C}}$ we mean a closed Jordan disk $\mathcal{R}$ together with a conformal map $h:\mathcal{R}\to E_x$ into the standard rectangle $E_x$, where
	$E_x:=[0,x]\times[0,1]\subset\mathbb{C}$. In this case, we define
	\begin{itemize}
		\item the base $\partial^{h,0}\mathcal{R}$ is $h^{-1}([0,x]\times\{0\})$;
		\item the roof $\partial^{h,1}\mathcal{R}$ is $h^{-1}([0,x]\times\{1\})$; \item every $h^{-1}(\{t\}\times[0,1])$, $0\leq t\leq x$ is a vertical curve of $\mathcal{R}$, in particular, $h^{-1}(\{\frac{x}{2}\}\times[0,1])$ is called the center arc of $\mathcal{R}$;
		\item the full family $\mathcal{F}^{full}(\mathcal{R})$ is the family of curves connecting the base and the roof in $\mathcal{R}$;
		\item the curve $h^{-1}(\{\frac{x}{2}\}\times[0,1])$ is also called the center arc of $\mathcal{F}^{full}(\mathcal{R})$;
		\item the extremal width of $\mathcal{R}$ is $\mathcal{W}(\mathcal{R}):=x$.
		It is well known that $$\mathcal{W}(\mathcal{R})=\mathcal{W}(\mathcal{F}^{full}(\mathcal{R})).$$
	\end{itemize}
	
	\subsection{The geometry of nests of tilings.} Consider a closed quasidisk $D\subset\mathbb{C}$.
	Fix a point $o$ in ${\rm int}(D)$ (the interior of $D$).
	Let $\phi$ be a conformal map from ${\rm int}(D)$ to $\mathbb{D}$ such that
	$\phi(o)=0$ and $\tilde{\phi}$ be the homeomorphism extension of $\phi$ to $D$. Then for any interval $I$ on $\partial D$, we define the combinatorial length $|I|_{\partial D}$ by the Euclidean length of $\tilde{\phi}(I)$. It is easy to check that the definition is independent of the choice of $\phi$. For any $b_1, b_2\in\partial D$, the combinatorial distance ${\rm dist}_{\partial D}(b_1,b_2)$ between $b_1$ and $b_2$ is defined by the minimum of combinatorial lengths of two intervals connecting $b_1$ and $b_2$ on $\partial D$. For any two nonempty subsets $A, B$ of $\partial D$, the combinatorial distance ${\rm dist}_{\partial D}(A,B)$ between $A$ and $B$ is defined by
	$$\inf_{a\in A, b\in B}{\rm dist}_{\partial D}(a,b).$$

	Let $\mathcal{T}=\{\mathcal{T}_n\}_{n\geq -1}$ be a system of finite partitions of $\partial D$ into finitely many closed intervals such that $\mathcal{T}_{n+1}$ is a refinement of $\mathcal{T}_n$. $\mathcal{T}$ is called a nest of tilings if
	\begin{itemize}
		\item the maximal diameter of intervals in $\mathcal{T}_n$ tends to $0$ as $n\to\infty$, and
		\item every interval in $\mathcal{T}_n$ for $n\geq m$ decomposes into at least two intervals of $\mathcal{T}_{n+2}$.
	\end{itemize}
	Similarly, a nest of tilings is defined for a closed quasiconformal arc.
	Intervals in $\mathcal{T}_n$ are also called intervals of level $n$ of $\mathcal{T}$. We denote by ${\rm EP}(\mathcal{T}_n)$ the set of endpoints of all intervals in $\mathcal{T}_n$.
	
	For any two intervals $I$ and $J$ of $\partial D$ with disjoint interiors and $I\cup J=\partial D$, if $\mathcal{T}=\{\mathcal{T}_n\}_{n\geq -1}$ is a nest of tilings on $I$ and $\mathcal{T}'=\{\mathcal{T}_n'\}_{n\geq -1}$ is a nest of tilings on $J$, then we define the union $\mathcal{T}\cup\mathcal{T}'$ as a nest of tilings on $\partial D$:
	$$\mathcal{T}\cup\mathcal{T}'=\{\mathcal{T}_n\cup\mathcal{T}_n'\}_{n\geq -1}.$$
	A nest of tilings $\mathcal{T}$ is said to have $M$-bounded combinatorics if $\mathcal{T}$ satisfies
	\begin{itemize}
		\item[(a)] $\mathcal{T}_{-1}$ has at most $M>1$ intervals;
		\item[(b)] for all $n\geq-1$, every interval in $\mathcal{T}_n$ consists of at most $M$ intervals in $\mathcal{T}_{n+1}$.
	\end{itemize}
	If $\mathcal{T}$ is only required to satisfy (b), then we call that $\mathcal{T}$ has post-$M$-bounded combinatorics.
	
	For any two disjoint intervals $I,J\subset\partial D$, we denote by
	\begin{itemize}
		\item $\mathcal{F}(I,J)$ the family of curves in $\hat{\mathbb{C}}$ connecting $I$ and $J$:
		$$\mathcal{F}(I,J):=\{\gamma:[0,1]\to\hat{\mathbb{C}}:\gamma(0)\in I,\ \gamma(1)\in J\};$$
		\item $\mathcal{F}^-(I,J)$ the family of curves in $D$ connecting $I$ and $J$:
		$$\mathcal{F}^-(I,J):=\{\gamma:[0,1]\to D:\gamma(0)\in I,\ \gamma(1)\in J\};$$
		\item $\mathcal{F}^+(I,J)$ the family of curves in $\hat{\mathbb{C}}\setminus{\rm int}(D)$ connecting $I$ and $J$:
		$$\mathcal{F}^+(I,J):=\{\gamma:[0,1]\to\hat{\mathbb{C}}\setminus{\rm int}(D):\gamma(0)\in I,\ \gamma(1)\in J\}.$$
		%\item $\mathcal{W}(\mathcal{F}(I,J))$ the extremal width of $\mathcal{F}(I,J)$, that is the reciprocal of the extremal length of $\mathcal{F}(I,J)$;
		%\item $\mathcal{W}(\mathcal{F}^-(I,J))$ the extremal width of $\mathcal{F}^-(I,J)$;
		%\item $\mathcal{W}(\mathcal{F}^+(I,J))$ the extremal width of $\mathcal{F}^+(I,J)$.
	\end{itemize}
	
	For any interval $I\in\mathcal{T}_n$, let $I_l, I_r\in\mathcal{T}_n$ be two its neighboring intervals. We denote by $[3I]^c$ the closure of $\partial D\setminus(I_l\cup I\cup I_r)$, that is
	$$[3I]^c=\overline{\partial D\setminus(I_l\cup I\cup I_r)},$$
	and define
	\begin{itemize}
		\item $\mathcal{F}^-_{3,\mathcal{T}}(I)$ to be the family of curves in $D$ connecting $I$ and $[3I]^c$;
		\item $\mathcal{F}^+_{3,\mathcal{T}}(I)$ to be the family of curves in $\hat{\mathbb{C}}\setminus D$ connecting $I$ and $[3I]^c$;
		\item $\mathcal{F}_{3,\mathcal{T}}(I)$ to be the family of curves in $\hat{\mathbb{C}}$ connecting $I$ and $[3I]^c$;
		\item $\mathcal{W}^{\pm}_{3,\mathcal{T}}(I)=\mathcal{W}(\mathcal{F}^{\pm}_{3,\mathcal{T}}(I))$; $\mathcal{W}_{3,\mathcal{T}}(I)=\mathcal{W}(\mathcal{F}_{3,\mathcal{T}}(I))$.
	\end{itemize}	
	
	A nest of tilings $\mathcal{T}$ is said to have essentially bounded outer geometry if there exists a constant $C$ such that for every interval $I$ of $\mathcal{T}$ we have $\mathcal{W}^+_{3,\mathcal{T}}(I)\leq C$. In this case, one also says that $\mathcal{T}$ has essentially $C$-bounded outer geometry. If moreover, $\mathcal{T}$ has (post-)$M$-bounded combinatorics, then one says that $\mathcal{T}$ has (post-) bounded outer geometry or (post-) $(C,M)$-bounded outer geometry. Similarly, essentially bounded inner geometry, (post-) bounded inner geometry, essentially bounded geometry, (post-) bounded geometry are defined.
	
	\begin{lemma}[\cite{DL}]
		\label{l11}For every pair $C, M$, there is a $K_{C,M}>1$ such that the following holds. Let $D$ be a closed quasidisk and $\mathcal{T}$ be a nest of tilings of $\partial D$. If $\mathcal{T}$ has $(C,M)$-bounded inner and outer geometries, then $D$ is a $K_{C,M}$ quasidisk.
	\end{lemma}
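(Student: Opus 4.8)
The plan is to recover $D$ as a quasidisk from the quasisymmetry of its conformal welding, with all constants depending only on $C$ and $M$. Recall the classical and quantitative welding characterization: $D$ is a $K$-quasidisk if and only if its conformal welding homeomorphism is $\eta$-quasisymmetric, with $K$ and $\eta$ controlling one another. Since $D$ is a closed quasidisk its boundary is a Jordan curve, so Riemann maps $\phi^-\colon{\rm int}(D)\to\mathbb{D}$ and $\phi^+\colon\hat{\mathbb{C}}\setminus D\to\hat{\mathbb{C}}\setminus\overline{\mathbb{D}}$ extend to homeomorphisms $\widetilde{\phi^\pm}$ of the closures, and the welding $h:=\widetilde{\phi^+}\circ(\widetilde{\phi^-})^{-1}\colon\partial\mathbb{D}\to\partial\mathbb{D}$ is a homeomorphism; it suffices to bound its modulus of quasisymmetry by a function of $C$ and $M$.

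First I would transport $\mathcal{T}$ to the unit circle through both Riemann maps. Because $\phi^-$ is conformal and $|I|_{\partial D}$ is by definition the Euclidean length of $\widetilde{\phi^-}(I)$, the collections $\mathcal{T}^-_n:=\{\widetilde{\phi^-}(I):I\in\mathcal{T}_n\}$ form a nest of tilings of $\partial\mathbb{D}$ whose arc lengths are precisely the combinatorial lengths of $\mathcal{T}$; likewise $\phi^+$ produces a nest $\mathcal{T}^+$ of $\partial\mathbb{D}$. By construction $h$ maps each $\mathcal{T}^-_n$ bijectively onto $\mathcal{T}^+_n$ and preserves the parent--child relation, so $\mathcal{T}^-$ and $\mathcal{T}^+$ realize one and the same combinatorial tree, which has bounded and definite branching by $(C,M)$-bounded combinatorics together with the defining axioms of a nest of tilings.

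The heart of the proof is to show that $\mathcal{T}^-$ is a \emph{regular} nest of tilings of $\partial\mathbb{D}$: there are $\lambda\in(0,1)$ and $C'>1$ depending only on $C,M$ such that every child arc has length at least $\lambda$ times its parent's and adjacent arcs of the same level have lengths within the factor $C'$. By conformal invariance of extremal width, $\mathcal{W}^-_{3,\mathcal{T}}(I)$ equals the extremal width in $\mathbb{D}$ of the family joining $\widetilde{\phi^-}(I)$ to $\widetilde{\phi^-}([3I]^c)$, hence the reciprocal of that of the conjugate family joining $\widetilde{\phi^-}(I_l)$ to $\widetilde{\phi^-}(I_r)$; standard estimates for curve families on the disk then show that $\mathcal{W}^-_{3,\mathcal{T}}(I)\le C$ prevents either neighbour $\widetilde{\phi^-}(I_l),\widetilde{\phi^-}(I_r)$ from being much smaller than $\widetilde{\phi^-}(I)$, with a ratio of size $e^{O(C)}$. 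Applying this both at $I$ and at its neighbours upgrades it to genuine comparability $|\widetilde{\phi^-}(I_l)|\asymp|\widetilde{\phi^-}(I)|\asymp|\widetilde{\phi^-}(I_r)|$ at every level, and summing the comparable children of a fixed parent yields the lower bound on the child/parent ratio; the same argument applied to $\phi^+$ and $\mathcal{W}^+_{3,\mathcal{T}}(I)\le C$ makes $\mathcal{T}^+$ regular of the same type. I expect this to be the main obstacle: the hypothesis supplies only one ``nearest-neighbour'' extremal width per interval and level, and turning it into uniform multi-scale metric regularity of the transported tilings --- keeping comparability constants uniform across all levels, and handling the finitely many coarse levels where $[3I]^c$ fails to be much larger than $I$ --- is where the real work sits.

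To conclude, $h$ is a circle homeomorphism carrying the regular nest $\mathcal{T}^-$ onto the regular nest $\mathcal{T}^+$ along their common combinatorial tree, and a routine bounded-geometry argument bounds its quasisymmetry: for $x,y,z\in\partial\mathbb{D}$ with $|x-y|\le|x-z|$, choose the level $n$ at which $x$ and $z$ first cease to lie in a single arc or a pair of adjacent arcs of $\mathcal{T}^-_n$; by regularity of $\mathcal{T}^-$ the three points then lie in a cluster of boundedly many consecutive level-$n$ arcs of comparable length, so $h$ carries them into a cluster of boundedly many consecutive level-$n$ arcs of $\mathcal{T}^+$, and regularity of $\mathcal{T}^+$ bounds $|h(x)-h(y)|$ above and $|h(x)-h(z)|$ below by comparable multiples of the common arc length, giving $|h(x)-h(y)|\le H_{C,M}|h(x)-h(z)|$. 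This weak quasisymmetry upgrades, on the connected doubling space $\partial\mathbb{D}$, to $\eta_{C,M}$-quasisymmetry of $h$, whence by the welding characterization $D$ is a $K_{C,M}$-quasidisk.
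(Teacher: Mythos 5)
The paper offers no proof of this lemma: it is imported as a black box from Dudko--Lyubich \cite{DL}, so there is no in-paper argument to compare yours against. Your welding strategy is the natural one and is essentially the classical route (a circle homeomorphism matching two combinatorially identical nests of bounded geometry is quasisymmetric), and your main reduction --- transporting $\mathcal{T}$ by the two Riemann maps and using $\mathcal{W}^{\mp}_{3,\mathcal{T}}(I)\le C$, conjugate-family duality and Lemma~\ref{l7131} to extract uniform adjacent comparability and a definite child/parent length ratio on each side --- is sound, including your correct identification of the multi-scale uniformity and the coarse levels as the places requiring care.

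Two steps need repair. First, the ``welding characterization'' is not a biconditional for arbitrary Jordan curves: a quasisymmetric welding homeomorphism does not by itself force the curve to be a quasicircle, since conformal welding is not injective on Jordan curves. You must invoke the standing hypothesis that $D$ is already a quasidisk (hence $\partial D$ is conformally removable) to conclude that $\partial D$ agrees, up to M\"obius, with the $K(\eta_{C,M})$-quasicircle obtained by solving the welding problem for the $\eta_{C,M}$-quasisymmetric $h$; only then is the final constant independent of the a priori quasidisk constant of $D$. Second, the justification of weak quasisymmetry is flawed as written: uniform comparability of adjacent level-$n$ arcs does not imply that a ball of radius comparable to $|I^n_x|$ meets only boundedly many level-$n$ arcs, because arc lengths may decay geometrically away from $x$ (e.g.\ consecutive lengths $1/2,1/4,1/8,\dots$ are pairwise $2$-comparable), so ``the three points lie in a cluster of boundedly many arcs'' can fail. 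The standard fix is to compare separation levels: if $n$ (resp.\ $m$) is the first level at which $x,z$ (resp.\ $x,y$) lie in non-adjacent arcs, then $|x-z|\asymp|I^n_x|$ and $|x-y|\asymp|I^m_x|$; the inequality $|x-y|\le|x-z|$ together with the definite child/parent ratio forces $n-m=O_{C,M}(1)$ when $m<n$, and the same ratio for $\mathcal{T}^+$ then yields $|h(x)-h(y)|\preceq_{C,M}|h(x)-h(z)|$. With these two corrections the argument closes.
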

	
	%For a closed Jordan disk $D\subset\mathbb{C}$, consider two disjoint intervals $I,J\subset\partial D$. Let us view $\hat{\mathbb{C}}\setminus(I\cup J)$ as a Riemann surface; with respect to this Riemann surface both $I,J$ have two sides: the outer sides $I^+, J^+$ and the inner sides $I^-, J^-$. Ignoring the endpoints of $I,J$, a curve $\gamma:(0,1)\to\hat{\mathbb{C}}\setminus(I\cup J)$ lands at $\gamma(1)\in I^+$ if
	%$$\gamma[1-\epsilon,1)\subset\hat{\mathbb{C}}\setminus Z\ \forall\epsilon>0\ {\rm and}\ \lim_{\tau\to1}\gamma(\tau)=\gamma(1).$$
	%If for every $I\in\mathcal{T}$ we have $\mathcal{W}_3(I)\leq C$, then we say that the nest of tilings $\mathcal{T}$ has essentially $C$-bounded geometry.
	Consider two disjoint intervals $I,J\subset\partial D$.
	%Let us view $\hat{\mathbb{C}}\setminus(I\cup J)$ as a Riemann surface; with respect to this Riemann surface both $I,J$ have two sides: the outer sides $I^+, J^+$ and the inner sides $I^-, J^-$. Ignoring the endpoints of $I,J$, a curve $\gamma:(0,1)\to\hat{\mathbb{C}}\setminus(I\cup J)$ lands at $\gamma(1)\in I^+$ if
	%$$\gamma[1-\epsilon,1)\subset\hat{\mathbb{C}}\setminus Z\quad \forall\epsilon>0\ {\rm and}\ \lim_{\tau\to1}\gamma(\tau)=\gamma(1).$$
	%Let
	%\begin{itemize}
	%	\item $\mathcal{F}(I^+,J^+)$ be the family of curves in $\hat{\mathbb{C}}\setminus(I\cup J)$ connecting $I^+$ and $J^+$;
	%	\item $\mathcal{W}(I^+,J^+)=\mathcal{W}(\mathcal{F}(I^+,J^+))$ be the extremal width of $\mathcal{F}(I^+,J^+)$.
	%\end{itemize}
	Let $\mathcal{F}(I^+,J^+)$ be the family of curves in $\hat{\mathbb{C}}\setminus(I\cup J)$ such that for any curve $\gamma:[0,1]\to\hat{\mathbb{C}}$ in $\mathcal{F}(I^+,J^+)$, we have that
	\begin{itemize}
		\item $\gamma(0)\in I$, $\gamma(1)\in J$ and $\gamma(0,1)\subset\hat{\mathbb{C}}\setminus(I\cup J)$;
		\item
		$\gamma[1-\epsilon,1)\subset\hat{\mathbb{C}}\setminus D$ and $\gamma(0,\epsilon]\subset\hat{\mathbb{C}}\setminus D$ for some $0<\epsilon<1$.
	\end{itemize}
	Let $\mathcal{W}(I^+,J^+)=\mathcal{W}(\mathcal{F}(I^+,J^+))$ be the extremal width of $\mathcal{F}(I^+,J^+)$.
	
	For any disjoint intervals $I_1$ and $I_2$ on $\partial D$, if $L_1$ and $L_2$ are the closures of complement intervals between $I_1$ and $I_2$ respectively such that $I_1<L_1<I_2<L_2$ (``$<$'' means the clockwise order on $\partial D$ for intervals with disjoint interiors), then we denote by $\mathcal{F}_r(I_1^+,I_2^+)$ the set of curves $\gamma\in\mathcal{F}(I_1^+,I_2^+)$ such that $\gamma$ is disjoint from $L_2$ and by $\mathcal{F}_l(I_1^+,I_2^+)$ the set of curves $\gamma\in\mathcal{F}(I_1^+,I_2^+)$ such that $\gamma$ is disjoint from $L_1$;
	we set $\mathcal{W}_r(I_1^+,I_2^+):=\mathcal{W}(\mathcal{F}_r(I_1^+,I_2^+))$ and $\mathcal{W}_l(I_1^+,I_2^+):=\mathcal{W}(\mathcal{F}_l(I_1^+,I_2^+))$.
	
	\begin{lemma}[\cite{DL}]
		\label{l20}Let $I,J\subset\partial D$ be a pair of disjoint intervals with the complement interval $L$ such that $I<L<J$ and $|L|_{\partial D}<\frac{\pi}{2}$. If
		$K:=\mathcal{F}_r(I^+,J^+)-\mathcal{W}^+(I,J)\gg\log\lambda$ with $\lambda>2$, then there are intervals $J_1, I_1\subset L$ with
		$$|J_1|_{\partial D}<\frac{{\rm dist}_{\partial D}(I,J_1)}{\lambda},\ |I_1|_{\partial D}<\frac{{\rm dist}_{\partial D}(I_1,J)}{\lambda},\ I<J_1<I_1<J$$
		such that
		either $\mathcal{W}_r(I,J_1)$ or $\mathcal{W}_r(I_1,J)$ has width $\geq2K-O(\log\lambda)$.
	\end{lemma}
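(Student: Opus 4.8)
The plan is to split off the curves in $\mathcal F_r(I^+,J^+)$ that genuinely enter $\mathrm{int}(D)$, to cut each such curve at its first and last visit to the short arc $L$, and to apply the serial (Gr\"otzsch-type) rule to the three resulting arcs while discarding the middle one; the factor $2$ in the conclusion is exactly what discarding that nonnegative term produces. Write $W:=\mathcal W_r(I^+,J^+)$ and $W_0:=\mathcal W(\mathcal F^+(I,J))=:\mathcal W^+(I,J)$, so that $K=W-W_0$ (this is how the quantity $K$ in the statement is to be read), and read $\mathcal W_r(I,J_1)$, $\mathcal W_r(I_1,J)$ as $\mathcal W_r(I^+,J_1^+)$, $\mathcal W_r(I_1^+,J^+)$.

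\emph{Splitting off the interior curves.} Decompose $\mathcal F_r(I^+,J^+)=\mathcal G_0\cup\mathcal G_1$, where $\mathcal G_0$ consists of the curves lying entirely in $\hat{\mathbb C}\setminus\mathrm{int}(D)$ and $\mathcal G_1$ of those meeting $\mathrm{int}(D)$. Since $\mathcal G_0\subseteq\mathcal F^+(I,J)$, subadditivity of extremal width under finite unions gives $\mathcal W(\mathcal G_1)\ge W-\mathcal W(\mathcal G_0)\ge W-W_0=K$. Let $b_I$, $b_J$ be the endpoints of $L$ adjacent to $I$, $J$. For $\gamma\in\mathcal G_1$: since $\gamma(0)\in I$, $\gamma(1)\in J$, $\gamma$ avoids $I\cup J$ on $(0,1)$ and avoids the far complementary interval, every visit of $\gamma$ to $\partial D$ on $(0,1)$ lies in $L$, and $\gamma$ must cross $\partial D$ to reach $\mathrm{int}(D)$, so it meets $L$. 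Let $p_\gamma$, $q_\gamma$ be the first and last points of $L$ along $\gamma$; cutting there writes $\gamma=\alpha_\gamma\cdot\mu_\gamma\cdot\beta_\gamma$, where $\alpha_\gamma$ joins $I$ to $p_\gamma$ inside $\hat{\mathbb C}\setminus\mathrm{int}(D)$ and meets $L$ only at $p_\gamma$, $\beta_\gamma$ joins $q_\gamma$ to $J$ inside $\hat{\mathbb C}\setminus\mathrm{int}(D)$ and meets $L$ only at $q_\gamma$, and $\mu_\gamma$ is an unrestricted arc from $p_\gamma$ to $q_\gamma$.

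\emph{Windows and the serial estimate.} Using the conformal definition of combinatorial length and the hypothesis $|L|_{\partial D}<\pi/2$, one shows that the part of $\mathcal G_1$ with $p_\gamma$ within combinatorial distance $|L|_{\partial D}\lambda^{-k}$ of $b_I$ (respectively, $q_\gamma$ within $|L|_{\partial D}\lambda^{-k}$ of $b_J$) contributes extremal width decaying geometrically in $k$ — a quadrilateral with one short side is thin — so only $O(\log\lambda)$ scales carry appreciable width. Choosing such a scale and pigeonholing over it produces intervals $J_1$ near $b_I$ and $I_1$ near $b_J$ with $I<J_1<I_1<J$ and with the two $\lambda$-thinness inequalities of the statement, together with a subfamily $\mathcal G_1'\subseteq\mathcal G_1$ such that $p_\gamma\in J_1$, $q_\gamma\in I_1$ for all $\gamma\in\mathcal G_1'$ and $\mathcal W(\mathcal G_1')\ge K-O(\log\lambda)$. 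For $\gamma\in\mathcal G_1'$ one then has $\alpha_\gamma\in\mathcal F_r(I^+,J_1^+)$ and $\beta_\gamma\in\mathcal F_r(I_1^+,J^+)$, and the decomposition $\gamma=\alpha_\gamma\cdot\mu_\gamma\cdot\beta_\gamma$ exhibits $\mathcal G_1'$ as overflowing the serial composition of $\{\alpha_\gamma\}$, $\{\mu_\gamma\}$, $\{\beta_\gamma\}$. These three arcs can be arranged to lie in pairwise disjoint subdomains — the first near $I$, the last near $J$, the middle kept away from both because $\gamma$ avoids the far complementary interval and meets $L$ only at $p_\gamma$, $q_\gamma$ off the middle — so the serial rule together with the overflow inequality gives
$$\frac1{\mathcal W(\mathcal G_1')}\ \ge\ \frac1{\mathcal W(\{\alpha_\gamma\})}+\frac1{\mathcal W(\{\mu_\gamma\})}+\frac1{\mathcal W(\{\beta_\gamma\})}\ \ge\ \frac2{\max\big(\mathcal W(\{\alpha_\gamma\}),\mathcal W(\{\beta_\gamma\})\big)},$$
discarding the nonnegative middle term. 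Hence $\max(\mathcal W(\{\alpha_\gamma\}),\mathcal W(\{\beta_\gamma\}))\ge 2\mathcal W(\mathcal G_1')\ge 2K-O(\log\lambda)$, and since $\mathcal W_r(I^+,J_1^+)\ge\mathcal W(\{\alpha_\gamma\})$ and $\mathcal W_r(I_1^+,J^+)\ge\mathcal W(\{\beta_\gamma\})$, one of $\mathcal W_r(I,J_1)$, $\mathcal W_r(I_1,J)$ is $\ge 2K-O(\log\lambda)$.

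\emph{Main obstacle.} The delicate point is the disjoint-domains claim: the middle arc $\mu_\gamma$ may a priori loop back toward $I$ or $J$, so invoking the serial rule requires trimming $\mu_\gamma$ and adjusting the windows $J_1$, $I_1$, using that $\gamma$ avoids the far complementary interval and the Jordan topology of $D$. The companion technical point is the geometric-decay estimate of the previous paragraph, which is what makes the loss genuinely $O(\log\lambda)$ and the windows $\lambda$-thin, and which is where the hypothesis $|L|_{\partial D}<\pi/2$ is actually used.
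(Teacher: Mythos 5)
The paper does not prove this lemma; it is cited verbatim from Dudko--Lyubich \cite{DL}, so there is no ``paper's own proof'' to compare with. I therefore assess the proposal on its own merits, and I think there is a genuine gap in the step that is supposed to produce the factor $2$.

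Your overall shape is plausible: split $\mathcal F_r(I^+,J^+)$ into the exterior family $\mathcal G_0\subseteq\mathcal F^+(I,J)$ and the interior-meeting family $\mathcal G_1$, get $\mathcal W(\mathcal G_1)\ge K$ by subadditivity, and cut each $\gamma\in\mathcal G_1$ at its first and last visit to $L$. The first and last pieces $\alpha_\gamma$, $\beta_\gamma$ do indeed stay in $\hat{\mathbb C}\setminus\mathrm{int}(D)$ and avoid the far interval. But the Gr\"otzsch serial rule you invoke --- $1/\mathcal W(\mathcal G_1')\ge 1/\mathcal W(\{\alpha_\gamma\})+1/\mathcal W(\{\mu_\gamma\})+1/\mathcal W(\{\beta_\gamma\})$ --- requires the three subfamilies to lie in \emph{pairwise disjoint regions}, so that admissible metrics for them can be added with no overlap of supports. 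Nothing in the construction ensures this. Both $\{\alpha_\gamma\}$ and $\{\beta_\gamma\}$ are families of curves in the same region $\hat{\mathbb C}\setminus\mathrm{int}(D)$ avoiding $L'$; an individual $\alpha_\gamma$ connects $I$ to a point of $J_1\subset L$ and is otherwise unconstrained there, so it can sweep the entire exterior (including the part ``near $J$''), and different $\alpha_{\gamma}$'s and $\beta_{\gamma'}$'s can overlap heavily. Your ``main obstacle'' paragraph addresses only the possibility that $\mu_\gamma$ loops back --- which is irrelevant once you drop the middle term --- and simply asserts, without argument, that the first arc is ``near $I$'' and the last is ``near $J$''. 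That assertion is false for arbitrary curves in the family and is exactly the point at which the serial rule fails to apply. (Being disjoint pieces of the same curve $\gamma$ is not enough; the serial rule is a statement about metrics on disjoint supports, not about disjointness of individual curves.) Without a genuine separating cross-cut or some other device that confines $\{\alpha_\gamma\}$ and $\{\beta_\gamma\}$ to disjoint domains, the inequality $\max(\mathcal W(\{\alpha_\gamma\}),\mathcal W(\{\beta_\gamma\}))\ge 2\mathcal W(\mathcal G_1')$ is not established, and that is the entire content of the lemma.

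Two secondary points. First, the ``windows'' step is only asserted: the geometric-decay-in-$k$ claim and the pigeonhole producing $J_1,I_1$ with the $\lambda$-thinness inequalities and only $O(\log\lambda)$ width loss are stated as if routine, but this is precisely where the hypotheses $|L|_{\partial D}<\pi/2$ and $K\gg\log\lambda$ must be used, and the direction of the construction (where $J_1$ sits relative to $b_I$, given that the thinness condition $|J_1|_{\partial D}<\mathrm{dist}_{\partial D}(I,J_1)/\lambda$ forces $J_1$ to be either far from $I$ or extremely small) is left vague. Second, a small conceptual slip: the factor $2$ does not come from ``discarding the middle term'' (dropping a nonnegative term only weakens the bound); it comes from the elementary step $1/a+1/b\ge 2/\max(a,b)$, and discarding $\mu_\gamma$ merely lets you take the max over two rather than three quantities. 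None of this affects the main verdict, which is that the serial-rule application is not justified as written.
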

	The following lemma gives a relation between extremal width and combinatorial length (see [Lemma 2.5, \cite{DL}]).
	\begin{lemma}
		\label{l7131}If intervals $I_1,I_2,I_3,I_4\subseteq\partial D$ with disjoint interiors satisfy $I_1\cup I_2\cup I_3\cup I_4=\partial D$ and $I_1<I_2<I_3<I_4$, then
		\begin{itemize}
			\item $\mathcal{W}^-(I_1,I_3)\leq c$ {implies} $\min\{|I_1|_{\partial D},|I_3|_{\partial D}\}\preceq_c\min\{|I_2|_{\partial D},|I_4|_{\partial D}\}$;
			\item $\min\{|I_1|_{\partial D},|I_3|_{\partial D}\}\leq c\cdot\min\{|I_2|_{\partial D},|I_4|_{\partial D}\}$ {implies}  $\mathcal{W}^-(I_1,I_3)\preceq_c1$.
		\end{itemize}
		Here $x\preceq_cy$ means that $\frac{x}{y}$ has a universal bound depending on the parameter $c$.
	\end{lemma}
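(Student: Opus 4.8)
\emph{Proof sketch.} The plan is to pass to the round disk and then reduce to the conformal classification of quadrilaterals. Since extremal width is conformally invariant and $|I_j|_{\partial D}$ is, by construction, the Euclidean length of $\widetilde\phi(I_j)$ for the Riemann map $\phi\colon\mathrm{int}(D)\to\mathbb D$ normalised by $\phi(o)=0$, we may assume $D=\overline{\mathbb D}$ and $o=0$; then $I_1,I_2,I_3,I_4$ are closed arcs of $\partial\mathbb D$ in cyclic order, $\ell_j:=|I_j|_{\partial D}$ is the angular length of $I_j$, and $\ell_1+\ell_2+\ell_3+\ell_4=2\pi$. Because $I_1$ and $I_3$ are the pair of arcs separated by the other two, $\mathcal F^-(I_1,I_3)$ is exactly the full family $\mathcal F^{\mathrm{full}}(\mathcal R)$ of the conformal rectangle $\mathcal R$ obtained by mapping $(\mathbb D;I_1,I_2,I_3,I_4)$ onto $E_x$ with $I_1,I_3$ sent to the base and the roof; hence $\mathcal W^-(I_1,I_3)=x=\mathcal W(\mathcal R)$, the conformal modulus of this quadrilateral. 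Since the modulus is a complete conformal invariant of a quadrilateral, $\mathcal W^-(I_1,I_3)$ is one fixed, continuous, strictly monotone function of the cross-ratio of the four endpoints of the $I_j$, which an elementary chord-length computation identifies as
\[
\chi\;:=\;\frac{\sin\frac{\ell_1+\ell_2}{2}\,\sin\frac{\ell_1+\ell_4}{2}}{\sin\frac{\ell_2}{2}\,\sin\frac{\ell_4}{2}}\;\in\;(1,\infty),
\]
with $\mathcal W^-(I_1,I_3)\to0$ as $\chi\to1^{+}$ (the arc $I_1$ or $I_3$ collapsing) and $\mathcal W^-(I_1,I_3)\to\infty$ as $\chi\to\infty$ (the gap $I_2$ or $I_4$ collapsing). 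Hence there is an increasing $\chi_0(\cdot)$ with $\mathcal W^-(I_1,I_3)\le c\iff\chi\le\chi_0(c)$, and $\chi\le c$ bounds $\mathcal W^-(I_1,I_3)$ in terms of $c$.

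It then remains to compare $\chi$ with $\min(\ell_1,\ell_3)/\min(\ell_2,\ell_4)$. From $\sin\frac\theta2\asymp\min(\theta,2\pi-\theta)$ on $(0,2\pi)$ together with $\sum_j\ell_j=2\pi$ one gets $\sin\frac{\ell_j}2\asymp\min(\ell_j,2\pi-\ell_j)$ and $\sin\frac{\ell_i+\ell_j}2\asymp\min(\ell_i+\ell_j,\ell_k+\ell_l)$ whenever $i,j$ are adjacent and $\{i,j,k,l\}=\{1,2,3,4\}$; substituting these and running the resulting finite case analysis over the size-orderings of $(\ell_1,\ell_2,\ell_3,\ell_4)$ gives
\[
1+\frac{\min(\ell_1,\ell_3)}{\min(\ell_2,\ell_4)}\;\preceq\;\chi\;\preceq\;\Bigl(1+\frac{\min(\ell_1,\ell_3)}{\min(\ell_2,\ell_4)}\Bigr)^{2},
\]
so $\chi$ is bounded precisely when $\min(\ell_1,\ell_3)/\min(\ell_2,\ell_4)$ is, each bound controlling the other. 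Feeding this into the previous paragraph yields both assertions: if $\mathcal W^-(I_1,I_3)\le c$ then $\chi\le\chi_0(c)$, hence $\min(\ell_1,\ell_3)\preceq_c\min(\ell_2,\ell_4)$; and if $\min(\ell_1,\ell_3)\le c\,\min(\ell_2,\ell_4)$ then $\chi\preceq_c1$, hence $\mathcal W^-(I_1,I_3)\preceq_c1$.

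The genuine work, I expect, is this last comparison rather than the conformal part: $\partial\mathbb D$ may carry a tiny arc flanked by a very long one, $\ell_1$ and $\ell_3$ need not be comparable, one of the $\ell_j$ may exceed $\pi$, and the short gap realising $\mathrm{dist}_{\partial\mathbb D}(I_1,I_3)=\min(\ell_2,\ell_4)$ may lie on either side, so the size-orderings must be examined one at a time, keeping track of the regimes in which $\sin\frac{\ell_i+\ell_j}2$ is controlled by the \emph{complementary} sum $\ell_k+\ell_l$. A second route, closer to the methods of \cite{DL} and avoiding the conformal-classification input, is to estimate the modulus directly: by quadrilateral duality $\mathcal W^-(I_1,I_3)$ equals the extremal length of $\mathcal F^-(I_2,I_4)$, and the test metric $\rho(z)=\mathbf{1}_{\{|I_2|\le|z-\zeta|\le\min(\ell_1,\ell_3)\}}/|z-\zeta|$ --- with $\zeta$ the midpoint of the shorter of $I_2,I_4$ (say $I_2$), using $\min(\ell_1,\ell_3)=\mathrm{dist}_{\partial\mathbb D}(I_2,I_4)$, and the case $\min(\ell_1,\ell_3)\le|I_2|$ being vacuous --- forces every curve of $\mathcal F^-(I_2,I_4)$ to have $\rho$-length $\succeq\log\frac{\min(\ell_1,\ell_3)}{|I_2|}$ while $\int\rho^2\preceq\log\frac{\min(\ell_1,\ell_3)}{|I_2|}$, so the length--area inequality gives $\mathcal W^-(I_1,I_3)\succeq\log\frac{\min(\ell_1,\ell_3)}{\min(\ell_2,\ell_4)}$; the matching upper bound is the classical estimate for the extremal distance between two boundary arcs of a disk, proved by a similar but more delicate length--area argument. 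In either approach the point requiring care is exactly the elementary geometry of the four arcs.
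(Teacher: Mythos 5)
Your argument is essentially sound, but note that the paper does not prove this lemma at all: it is quoted verbatim from [Lemma 2.5, \cite{DL}], so there is no internal proof to compare against, and what you have written is a genuine (sketch of a) proof where the paper supplies only a citation. Your first route --- conformal transport to $\overline{\mathbb D}$ (legitimate, since $|I_j|_{\partial D}$ is by definition the arc length of $\tilde\phi(I_j)$ and extremal width is a conformal invariant), identification of $\mathcal W^-(I_1,I_3)$ with the modulus of the quadrilateral $(\mathbb D;I_1,I_2,I_3,I_4)$, and monotone dependence of that modulus on the cross-ratio $\chi$ --- is correct and self-contained modulo the elementary comparison of $\chi$ with $r:=\min(\ell_1,\ell_3)/\min(\ell_2,\ell_4)$, which you rightly flag as the real work but leave as an asserted case analysis. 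That step can be streamlined: since the four endpoints are concyclic, Ptolemy's identity gives exactly $\chi=1+\frac{\sin(\ell_1/2)\sin(\ell_3/2)}{\sin(\ell_2/2)\sin(\ell_4/2)}$, so you only need the two-sided comparison of $\frac{\sin(\ell_1/2)\sin(\ell_3/2)}{\sin(\ell_2/2)\sin(\ell_4/2)}$ with $r$, which follows from $\sin(\theta/2)\asymp\min(\theta,2\pi-\theta)$ and the constraint $\sum\ell_j=2\pi$ with far less casework than estimating the diagonal chords $\sin\frac{\ell_i+\ell_j}{2}$; I checked the boundary regimes (one huge arc, $\ell_1\not\asymp\ell_3$, nested scales such as $\ell_2\ll\ell_1\ll1$) and the equivalence ``$\chi$ bounded iff $r$ bounded, quantitatively'' does hold, which is all the lemma needs --- your precise exponent-$2$ upper bound is plausible but unverified and unnecessary. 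Your second route (duality plus a logarithmic annulus test metric around the shorter of $I_2,I_4$, giving $\mathcal W^-(I_1,I_3)\succeq\log\frac{\min(\ell_1,\ell_3)}{\min(\ell_2,\ell_4)}$, with the classical upper bound for the other bullet) is in fact closer in spirit to how Dudko and Lyubich argue and avoids invoking the conformal classification of quadrilaterals; either route is acceptable, and the first buys a sharper iff-type statement while the second generalizes more readily to non-circular boundaries.
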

	
	%\begin{lemma}
	%Let $I,J\subset\partial D$ be a pair of intervals with $\lfloor I,J\rfloor<1/2$, and let be the complementary interval between $I,J$ with $I<L<J$. Set
	%$$K:=\mathcal{W}_L^o(I,J)-\mathcal{W}^+(I,J).$$
	%If $K\gg \log\lambda$ with $\lambda>2$, then there are intervals
	%$$J_1, I_1\subset L,\ |j_1|<\frac{{\rm dist}(I,J_1)}{\lambda},\ |I_1|<\frac{{\rm dist}(I_1,J)}{\lambda},\ I<J_1<I_1<J$$
	%such that
	%$$\mathcal{W}_{L_a}^o(I,J_1)\oplus\mathcal{W}_{L_b}^o(I_1,J)\geq K-O(\log\lambda),$$
	%where $L_a, L_b\subset L$ are the complementary intervals between $I, J_1$ and $I_1, J$ respectively.
	%In particular, either $\mathcal{W}_{L_a}^o(I,J_1)$ or $\mathcal{W}_{L_b}^o(I_1,J)$ has width $\geq2K-O(\log\lambda)$.
	%\end{lemma}
	The following two lemmas are stated for a nest of tilings. One can see corresponding results for pseudo-Siegel disks in \cite{DL}, see [Lemma\ 11.4\ (II),\ \cite{DL}] and [Lemma\ 11.6,\ \cite{DL}] for details.
	\begin{lemma}
		\label{l12}Assume that a nest of tilings $\mathcal{T}$ of the closed quasidisk $D$ has post-bounded inner and outer geometries. Then $\mathcal{T}$ has essentially bounded geometry.
	\end{lemma}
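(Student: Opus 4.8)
The plan is to bound $\mathcal{W}_{3,\mathcal{T}}(I)$ uniformly over all intervals $I$ of $\mathcal{T}$, which is exactly the assertion of essentially bounded geometry. Write $C,M$ for the structural constants in the hypothesis, so that for every $I$ one has $\mathcal{W}^-_{3,\mathcal{T}}(I)\le C$ and $\mathcal{W}^+_{3,\mathcal{T}}(I)\le C$, and every interval of $\mathcal{T}_n$ subdivides into at most $M$ intervals of $\mathcal{T}_{n+1}$. What is really at stake is a bound $\mathcal{W}_{3,\mathcal{T}}(I)\le C'$ depending \emph{only on $C$ and $M$}: a bound depending in addition on the dilatation of $D$ would follow at once by transporting the extremal metric of $\mathcal{F}^-_{3,\mathcal{T}}(I)$ across a quasiconformal reflection in $\partial D$, but that reflection is not controlled by $C,M$ (its control is the content of Lemma \ref{l11}), so the argument must instead use the nest of tilings $\mathcal{T}$ as a combinatorial scaffold.

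The first step is to read off from Lemma \ref{l7131} the comparisons of combinatorial lengths that essentially bounded inner and outer geometries provide. Applying its first clause with $I_1=J$, $I_3=[3J]^c$ and $I_2,I_4$ the two complementary intervals $J_r,J_l$ (the finitely many coarsest levels being treated directly), the bound $\mathcal{W}^-_{3,\mathcal{T}}(J)\le C$ forces neighbouring intervals of a common level to have comparable inner combinatorial lengths; chaining through the at most $M$ children of an interval then shows that an interval and any of its children, and any two among $I,I_l,I_r$ together with all their children, have inner combinatorial lengths comparable with a constant depending only on $C,M$. The same holds for the outer combinatorial length, using $\mathcal{W}^+_{3,\mathcal{T}}\le C$ and the exterior analogue of Lemma \ref{l7131}. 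Feeding these comparisons into the second clause of Lemma \ref{l7131} (and its exterior analogue) shows in turn: whenever two intervals among $I,I_l,I_r$ and their children are separated on $\partial D$ by at least one further such interval, both the inner and the outer family joining them has width $\preceq_{C,M}1$. These families, together with the $\mathcal{F}^{\pm}_{3,\mathcal{T}}(\tilde I)$ for $\tilde I$ ranging over $I,I_l,I_r$ and the outer neighbours $I_l',I_r'$ of $I_l,I_r$, form a list of \emph{elementary families} whose cardinality is bounded in terms of $M$ and each of whose widths is $\le C''(C,M)$.

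The core of the argument is a planar-topological analysis of an arbitrary curve $\gamma\in\mathcal{F}_{3,\mathcal{T}}(I)$, written as the concatenation of its \emph{excursions} — the closures of the components of $\gamma^{-1}(\hat{\mathbb{C}}\setminus\partial D)$, each lying in $D$ or in $\hat{\mathbb{C}}\setminus{\rm int}(D)$ — with successive landing points on $\partial D$ running from $\gamma(0)\in I$ to $\gamma(1)\in[3I]^c$; after a routine reduction one may assume there are finitely many excursions, none landing at a tile endpoint. Examining the first excursion that lands in $[3I]^c$, and, when its two endpoints fall in adjacent tiles so that the associated two-interval family has unbounded width, peeling backwards through the preceding excursions until a pair of endpoints separated by a combinatorial buffer appears, one isolates inside $\gamma$ a sub-arc belonging to one of the elementary families — \emph{unless} $\gamma$ is forced to creep along $\partial D$ through a long chain of successively adjacent tiles, in which case it must cross every tile endpoint along that chain. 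One then assembles an admissible metric $\rho$ for $\mathcal{F}_{3,\mathcal{T}}(I)$ as the maximum of the (near-)extremal metrics of the elementary families, plus a small term of negligible $L^2$ mass supported near the tile endpoints that penalises the creeping curves; this gives $\int_\gamma\rho\ge1$ for every $\gamma\in\mathcal{F}_{3,\mathcal{T}}(I)$ while $\int_{\hat{\mathbb{C}}}\rho^2$ is bounded in terms of $C,M$, hence $\mathcal{W}_{3,\mathcal{T}}(I)\le C'(C,M)$. Only clause (b) of $M$-bounded combinatorics, and only for intervals close to $I$, is used, so the post-form of the hypothesis is enough; the scheme follows [Lemma 11.4 (II), \cite{DL}] and [Lemma 11.6, \cite{DL}].

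The hardest part will be exactly this combinatorial-topological bookkeeping. A curve of $\mathcal{F}_{3,\mathcal{T}}(I)$ may meet $\partial D$ arbitrarily often and in either direction, and it must be reduced to an object of bounded combinatorial complexity using the tiling and Lemma \ref{l7131} alone, never a quasiconformal reflection or the dilatation of $D$. The specific subtlety that forces the creeping correction into $\rho$ is that a curve can decline to make any single honest crossing by sliding along $\partial D$ one tile at a time; arranging $\rho$ so that it charges such curves while keeping $\int\rho^2$ bounded purely in terms of $C,M$ is the technical crux.
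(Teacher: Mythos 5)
Your strategy is genuinely different from the paper's, and it has a gap at exactly the point you yourself flag as ``the technical crux''; as written that crux is not resolved, and I do not believe it can be resolved by the metric you describe. The problematic curves are those all of whose excursions are combinatorially short \emph{at their own, arbitrarily deep, level}. A curve of $\mathcal{F}_{3,\mathcal{T}}(I)$ must at some point pass over the middle of $I_l$, say over a point $e$; but it can do so by a single excursion whose two landing points $a,b$ lie in the two level-$k$ tiles adjacent to a level-$k$ tile endpoint near $e$, with $k$ as large as you like. Such an excursion overflows no $\mathcal{F}^{\pm}_{3,\mathcal{T}}(J)$ for $J$ of level bounded in terms of $n$, so it escapes every member of your finite list of elementary families (all built from $I,I_l,I_r$ and their children at bounded depth). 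Moreover the excursion is an arc in $D$ or in $\hat{\mathbb{C}}\setminus{\rm int}(D)$ joining $a$ to $b$: it jumps \emph{over} the tile endpoints between $a$ and $b$ without passing anywhere near them, so a correction term ``supported near the tile endpoints'' collects no length from it. To charge all such curves you would need mass near tile endpoints of all levels---a dense subset of $\partial D$---with no control over which finitely many of them a given curve actually approaches, and I see no way to keep $\int\rho^2$ bounded in terms of $C,M$ alone. This is not a repairable detail: an estimate of the form $\mathcal{W}_{3,\mathcal{T}}(I)\preceq\mathcal{W}^-_{3,\mathcal{T}}(I)+\mathcal{W}^+_{3,\mathcal{T}}(I)+(\text{local corrections})$ is false for a general Jordan curve, and the lemma is morally equivalent to the existence of a controlled quasiconformal reflection (Lemma \ref{l11}); a single admissible metric assembled from finitely many families localized near $I$ cannot capture the global information required.

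The paper's proof goes the opposite way: it argues by contradiction and \emph{amplifies}. Assuming $\mathcal{W}_{3,\mathcal{T}}(I)=K\gg_{C,M}1$, it invokes [Lemma 2.8, \cite{DL}] to replace $\mathcal{F}_{3,\mathcal{T}}(I)$, at a cost $O_{C,M}(1)$ controlled by inner widths, with a family of curves having outer endpoints and avoiding the center arc of $\mathcal{F}^-_{3,\mathcal{T}}(I)$; such curves are trapped on one side of $I$, giving $\mathcal{W}_r(\widetilde{[3I]^c}^+,\tilde{I}^+)\succeq K$ while the corresponding purely outer width stays $O_C(1)$. Lemma \ref{l20} then locates sub-intervals across which the one-sided width multiplies by at least $\frac{3}{2}$, and iterating produces pairs of intervals with widths $\succeq(\frac{3}{2})^nK$ whose outer widths remain $O_{C,M}(1)$; transporting to the disk by a global quasiconformal map and applying the symmetry principle bounds the full-plane widths of these pairs, contradicting the exponential growth (the dilatation of $D$ enters only as a finite quantity eventually exceeded, so the final bound depends only on $C,M$). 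If you want a self-contained proof rather than a citation of [Lemma 11.4 (II) and Lemma 11.6, \cite{DL}], the amplification route is the one to follow.
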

	%\begin{remark}
	%	{\rm In \cite{DL}, [Lemma\ 11.4\ (II),\ \cite{DL}] is stated for pseudo-Siegel disks, while here Lemma \ref{l12} is stated for a general frame. They are essentially the same, and here we also prove Lemme \ref{l12} by the same way as that of [Lemma\ 11.4\ (II),\ \cite{DL}].}
	%\end{remark}
	\begin{proof}
		We assume that $\mathcal{T}$ has post-($C,M$)-bounded inner and outer geometries. Next, we prove the lemma by contradiction. We suppose that
		for some $I\in\mathcal{T}_n$, $$\mathcal{W}_{3,\mathcal{T}}(I)=K\gg_{C,M}1.$$
		Evidently, $\mathcal{T}_n$ has at least $4$ intervals. We let $I_l$ and $I_r$ be two intervals of level $n$ adjacent to $I$ such that $I_r<I<I_l$.
		We take the given point $o$ in $D$ appropriately such that $|I|_{\partial D}>\frac{\pi}{2}$ and $|I|_{\partial D}\asymp|I_r|_{\partial D}\asymp|I_l\cup[3I]^c|_{\partial D}$. Then
		\begin{equation}
			\label{e2.12.1}\left|\partial D\setminus I\right|_{\partial D}<\frac{\pi}{2}
		\end{equation}
		and by Lemma \ref{l7131}, we have that there exists a constant $l_C>1$ depending on $C$ such that for any two adjacent intervals $J_1, J_2\subseteq I_r\cup I_l$ of the same level ($\geq n$), we have
		$$|J_1|_{\partial D}\asymp_{l_C}|J_2|_{\partial D},\ {\rm that\ is}\
		\frac{1}{l_C}<\frac{|J_1|_{\partial D}}{|J_2|_{\partial D}}<l_C.$$

		Let $I_{l,l}^{n+4}\subset I_l$ be the interval of level $n+4$ adjacent to $[3I]^c$; let $I_{l,r}^{n+4}\subset I_l$ be the interval of level $n+4$ adjacent to $I$; let $I_{r,l}^{n+4}\subset I_r$ be the interval of level $n+4$ adjacent to $I$; let $I_{r,r}^{n+4}\subset I_r$ be the interval of level $n+4$ adjacent to $[3I]^c$.
		Set
		$$\hat{I}:=I\cup I_{l,r}^{n+4}\cup I_{r,l}^{n+4}\
		{\rm and}\
		\widehat{[3I]^c}:=[3I]^c\cup I_{l,l}^{n+4}\cup I_{r,r}^{n+4}.$$
		It follows from [Lemma 2.8, \cite{DL}] that there exist two intervals $\tilde{I},\ \widetilde{[3I]^c}$ with
		$$I\subseteq\tilde{I}\subseteq\hat{I},\ [3I]^c\subseteq\widetilde{[3I]^c}\subseteq\widehat{[3I]^c}$$
		and
		a subfamily
		$\mathcal{R}$ of $\mathcal{F}(\widetilde{[3I]^c}^+,\tilde{I}^+)$ disjoint from the center arc $\gamma_{1/2}$ of $\mathcal{F}_{3,\mathcal{T}}^-(I)$ such that $$\mathcal{W}(\mathcal{R})\geq\mathcal{W}_{3,\mathcal{T}}(I)+O(C^-),$$
		where $C^-:=\mathcal{W}^-(I_l,I_r)+\mathcal{W}^-(I,\partial D\setminus\hat{I})+\mathcal{W}^-([3I]^c,\partial D\setminus\widehat{[3I]^c})$.
		%and $\gamma_{1/2}$ is the center arc of $\mathcal{F}_{3,\mathcal{T}}^-(I)$, that is the vertical curve in $\mathcal{F}_{3,\mathcal{T}}^-(I)$ splitting $\mathcal{F}_{3,\mathcal{T}}^-(I)$ into two rectangles with the half width $\frac{\mathcal{W}_{3,\mathcal{T}}^-(I)}{2}$.
		
		We divide $I$, $\overline{I_l\setminus I_{l,l}^{n+4}}$ and $\overline{I_r\setminus I_{r,r}^{n+4}}$ into the union of intervals of level $n+4$ as follows:
		$$I=\cup_{j=1}^kI_j^{n+4},\ \overline{I_l\setminus I_{l,l}^{n+4}}=\cup_{j=1}^{k_l}I_{l,j}^{n+4}\ {\rm and}\ \overline{I_r\setminus I_{r,r}^{n+4}}=\cup_{j=1}^{k_r}I_{r,j}^{n+4},$$
		where $3\leq k,k_l,k_r\leq M^4$ are positive integers, and $I_j^{n+4}$ ($1\leq j\leq k$), $I_{l,j}^{n+4}$ ($1\leq j\leq k_l$), $I_{r,j}^{n+4}$ ($1\leq j\leq k_r$) are intervals of level $n+4$.
		Then
		$\mathcal{F}^-(I_l,I_r)$ overflows $\mathcal{F}^-_{3,\mathcal{T}}(I_l)$; $\mathcal{F}^-(I,\partial D\setminus\hat{I})$ overflows $\mathcal{F}^-_{3,\mathcal{T}}(I^{n+4}_1)\cup\mathcal{F}^-_{3,\mathcal{T}}(I^{n+4}_2)\cup\cdots\cup\mathcal{F}^-_{3,\mathcal{T}}(I^{n+4}_k)$; $\mathcal{F}^-(\partial D\setminus\widehat{[3I]^c},[3I]^c)$ overflows $\left(\cup_{j=1}^{k}\mathcal{F}^-_{3,\mathcal{T}}(I^{n+4}_j)\right)\cup\left(\cup_{j=1}^{k_l}\mathcal{F}^-_{3,\mathcal{T}}(I^{n+4}_{l,j})\right)\cup\left(\cup_{j=1}^{k_r}\mathcal{F}^-_{3,\mathcal{T}}(I^{n+4}_{r,j})\right)$.
		It follows that
		$$\mathcal{W}^-(I_l,I_r)\leq\mathcal{W}^-_{3,\mathcal{T}}(I_l)\preceq_{C}1,$$
		$$\mathcal{W}^-(I,\partial D\setminus\hat{I})\leq\mathcal{W}^-_{3,\mathcal{T}}(I^{n+4}_1)+\mathcal{W}^-_{3,\mathcal{T}}(I^{n+4}_2)+\cdots+\mathcal{W}^-_{3,\mathcal{T}}(I^{n+4}_k)\preceq_{C,M}1$$
		and
		$$\mathcal{W}^-([3I]^c,\partial D\setminus\widehat{[3I]^c})\leq\sum_{j=1}^{k}\mathcal{W}^-_{3,\mathcal{T}}(I^{n+4}_j)+\sum_{j=1}^{k_l}\mathcal{W}^-_{3,\mathcal{T}}(I^{n+4}_{l,j})+\sum_{j=1}^{k_r}\mathcal{W}^-_{3,\mathcal{T}}(I^{n+4}_{r,j})\preceq_{C,M}1.$$
		Thus $C^-\preceq_{C,M}1$ and $$\mathcal{W}(\mathcal{R})\geq\mathcal{W}_{3,\mathcal{T}}(I)+O_{C,M}(1).$$
		Since $\mathcal{F}^+(\tilde{I},\widetilde{[3I]^c})$ overflows
		$\left(\cup_{j=1}^{k}\mathcal{F}^+_{3,\mathcal{T}}(I^{n+4}_j)\right)\cup\mathcal{F}^+_{3,\mathcal{T}}(I_{l,r}^{n+4})\cup\mathcal{F}^+_{3,\mathcal{T}}(I_{r,l}^{n+4})$, we have
		\begin{equation}
			\label{e2.12}\mathcal{W}(\mathcal{F}^+(\tilde{I},\widetilde{[3I]^c}))\leq
			\sum_{j=1}^{k}\mathcal{W}^+_{3,\mathcal{T}}(I^{n+4}_j)+\mathcal{W}^+_{3,\mathcal{T}}(I_{l,r}^{n+4})+\mathcal{W}^+_{3,\mathcal{T}}(I_{r,l}^{n+4})\preceq_{C,M}1.
		\end{equation}
		
		%For any disjoint intervals $I_1$ and $I_2$ on $\partial D$, if $L_1$ and $L_2$ are the closures of complement intervals between $L_1$ and $L_2$ respectively such that $I_1<L_1< I_2<L_2$, then
		%we denote by $\mathcal{F}_r(I_1^+,I_2^+)$ the set of curves $\gamma\in\mathcal{F}(I_1^+,I_2^+)$ such that $\gamma$ is disjoint from $L_2$ and by $\mathcal{F}_l(I_1^+,I_2^+)$ the set of curves $\gamma\in\mathcal{F}(I_1^+,I_2^+)$ such that $\gamma$ is disjoint from $L_1$;
		%we set $\mathcal{W}_r(I_1^+,I_2^+):=\mathcal{W}(\mathcal{F}_r(I_1^+,I_2^+))$ and $\mathcal{W}_l(I_1^+,I_2^+):=\mathcal{W}(\mathcal{F}_l(I_1^+,I_2^+))$.
		Recall that $\mathcal{F}_r(\widetilde{[3I]^c}^+,\tilde{I}^+)$ is the set of curves $\gamma\in\mathcal{F}(\widetilde{[3I]^c}^+,\tilde{I}^+)$ such that $\gamma$ is disjoint from $I_l\setminus(\widetilde{[3I]^c}\cup \tilde{I})$ and $\mathcal{F}_l(\widetilde{[3I]^c}^+,\tilde{I}^+)$ is the set of curves $\gamma\in\mathcal{F}(\widetilde{[3I]^c}^+,\tilde{I}^+)$ such that $\gamma$ is disjoint from $I_r\setminus(\widetilde{[3I]^c}\cup \tilde{I})$; $\mathcal{W}_r(\widetilde{[3I]^c}^+,\tilde{I}^+)=\mathcal{W}(\mathcal{F}_r(\widetilde{[3I]^c}^+,\tilde{I}^+))$
		and
		$\mathcal{W}_l(\widetilde{[3I]^c}^+,\tilde{I}^+)=\mathcal{W}(\mathcal{F}_l(\widetilde{[3I]^c}^+,\tilde{I}^+))$.
		
		%Let $\mathcal{R}_1$ be a sublamination of $\mathcal{R}\setminus\mathcal{F}^+(\hat{I},\widehat{[3I]^c})$ not intersecting $I_r\setminus(I_{r,l}^{n+4}\cup I_{r,r}^{n+4})$; let $\mathcal{R}_2$ be a sublamination of $\mathcal{R}\setminus\mathcal{F}^+(\hat{I},\widehat{[3I]^c})$ not intersecting $I_l\setminus(I_{l,l}^{n+4}\cup I_{l,r}^{n+4})$; let $\mathcal{R}_3$ be a sublamination of $\mathcal{R}\setminus\mathcal{F}^+(\hat{I},\widehat{[3I]^c})$ overflowing $\mathcal{F}^+(I_l,I_r)$.
		
		%$\mathcal{R}_1$ overflows $\mathcal{F}_r(\hat{I}^+,\widehat{[3I]^c}^+)$;
		%$\mathcal{R}_2$ overflows $\mathcal{F}_l(\hat{I}^+,\widehat{[3I]^c}^+)$;
		%$\mathcal{R}_3$ overflows $\mathcal{F}^+(I_l,I_r)$.
		
		%$\mathcal{R}\setminus\mathcal{F}^+(\hat{I},\widehat{[3I]^c})$ overflows $\mathcal{R}_1\cup\mathcal{R}_2\cup\mathcal{R}_3$.
		%By the definition of $\mathcal{R}$
		Since $\mathcal{R}\subset\mathcal{F}(\widetilde{[3I]^c}^+,\tilde{I}^+)$ is disjoint from the center arc $\gamma_{1/2}$, we have that
		$\mathcal{R}$
		%$\setminus\mathcal{F}^+(\hat{I},\widehat{[3I]^c})$
		overflows $\mathcal{F}_r(\widetilde{[3I]^c}^+,\tilde{I}^+)\cup\mathcal{F}_l(\widetilde{[3I]^c}^+,\tilde{I}^+)\cup\mathcal{F}^+(I_l,I_r)\cup\mathcal{F}^+(I_r,I_l)$.
		Since $\mathcal{F}^+(I_l,I_r)$ overflows $\mathcal{F}^+_{3,\mathcal{T}}(I_l)$, we have $$\mathcal{W}(\mathcal{F}^+(I_l,I_r))\leq\mathcal{W}^+_{3,\mathcal{T}}(I_l)\preceq_{C}1.$$
		Then
		\begin{align*}
			\mathcal{W}_r(\widetilde{[3I]^c}^+,\tilde{I}^+)+\mathcal{W}_l(\widetilde{[3I]^c}^+,\tilde{I}^+)&\geq\mathcal{W}(\mathcal{R})-2\mathcal{W}(\mathcal{F}^+(I_l,I_r))\\
			%&\geq\mathcal{W}(\mathcal{R})-\mathcal{W}(\mathcal{F}^+(I_l,I_r))\\
			&\geq\mathcal{W}_{3,\mathcal{T}}(I)+O_{C,M}(1).
		\end{align*}
		Thus
		\begin{equation*}
			\mathcal{W}_r(\widetilde{[3I]^c}^+,\tilde{I}^+)\geq0.45K\ {\rm or}\
			\mathcal{W}_l(\widetilde{[3I]^c}^+,\tilde{I}^+)\geq0.45K.
		\end{equation*}
		Without loss of generality, we assume
		\begin{equation}
			\label{e2.20}\mathcal{W}_r(\widetilde{[3I]^c}^+,\tilde{I}^+)\succeq K.
		\end{equation}
		Set $R_{C,M}:=2Ml_C^{2M+1}$.
		Then it follows from (\ref{e2.12.1}), (\ref{e2.12}), (\ref{e2.20}) and Lemma \ref{l20} that there exist two intervals $I_1',I_1''\subseteq I_r\setminus(\widetilde{[3I]^c}\cup \tilde{I})$ with $\widetilde{[3I]^c}<I_1'<I_1''<\tilde{I}$, $|I_1'|_{\partial D}<\frac{1}{R_{C,M}}{\rm dist}_{\partial D}(I_1',\widetilde{[3I]^c})$, $|I_1''|_{\partial D}<\frac{1}{R_{C,M}}{\rm dist}_{\partial D}(I_1'',\tilde{I})$ such that
		\begin{equation}
			\label{e2.21}\mathcal{W}_r(\widetilde{[3I]^c}^+,I_1'^+)\geq2\mathcal{W}_r(\widetilde{[3I]^c}^+,\tilde{I}^+)+O_{C,M}(1)\ {\rm or}\
			\mathcal{W}_r(I_1''^+,\tilde{I}^+)\geq2\mathcal{W}_r(\widetilde{[3I]^c}^+,\tilde{I}^+)+O_{C,M}(1).
		\end{equation}
		If $\mathcal{W}_r(\widetilde{[3I]^c}^+,I_1'^+)\geq2\mathcal{W}_r(\widetilde{[3I]^c}^+,\tilde{I}^+)+O_{C,M}(1)$, then we set $I_0:=\widetilde{[3I]^c}$ and $I_1:=I_1'$; if $\mathcal{W}_r(I_1''^+,\tilde{I}^+)\geq2\mathcal{W}_r(\widetilde{[3I]^c}^+,\tilde{I}^+)+O_{C,M}(1)$, then we set $I_0:=I_1''$ and $I_1:=\tilde{I}$.
		Thus by (\ref{e2.20}) and (\ref{e2.21}), we have
		\begin{equation}
			\label{e2.10}\mathcal{W}_r(I_0^+,I_1^+)\succeq\frac{3}{2}K.
		\end{equation}
		
		Without loss of generality, we assume that $I_0=I_1''$ and $I_1=\tilde{I}$.
		Then $[3I]^c<I_0<I_1$.
		We let $t$ (necessary $>n$) be the smallest positive integer such that there is an interval $J'$ of level $t$ between $I_0$ and $I_1$ with $I_0<J'<I_1$. Then $[3I]^c<I_0<J'<I_1$.
		We decompose $I_r$ into intervals of level $t$, written as $I_r=\cup_{j=1}^{k_t}I_{r,j}^t$, with
		$I_{r,1}^t<I_{r,2}^t<\cdots<I_{r,k_t}^t$.
		%Let $I_{r,0}^t\subseteq[3I]^c$ be the interval of level $t$ adjacent to $I_{r,1}^t$. Then $I_{r,0}^t<I_{r,1}^t<\cdots<I_{r,k_t}^t$.
		Then there exist positive integers $j_1,j_0,j_2$ with
		$1\leq j_1\leq j_0\leq j_2\leq k_t$ such that $I_{r,j_0}^t=J'$, $I_{r,j_1}^t\cap I_0\not=\emptyset$, $I_{r,j_2}^t\cap I_1\not=\emptyset$ and $I_{r,j}^t\cap (I_1\cup I_0)=\emptyset$ for all $j_1<j<j_2$.
		The smallest property of $t$ gives $j_2-j_1+1\leq 2M$.
		Observe that $\sum_{j=j_1}^{j_2}|I_{r,j}^t|_{\partial D}\geq{\rm dist}_{\partial D}(I_0,I_1)>R_{C,M}|I_0|_{\partial D}$.
		Then
		$\max_{j_1\leq j\leq j_2}\left\{|I_{r,j}^t|_{\partial D}\right\}>\frac{R_{C,M}|I_0|_{\partial D}}{2M}$ and hence
		$|I_{r,j_1}^t|_{\partial D}>\frac{R_{C,M}|I_0|_{\partial D}}{2Ml_C^{2M}}=l_C|I_0|_{\partial D}$.
		If $j_1=1$, then $I_0\subseteq I_{r,j_1}^t$; if $j_1>1$, then
		$|I_{r,j_1-1}^t|_{\partial D}>|I_0|_{\partial D}$ and hence
		$I_0\subseteq I_{r,j_1-1}^t\cup I_{r,j_1}^t$.
		It follows that $\mathcal{F}^+(I_0,I_1)$ overflows $\mathcal{F}^+_{3,\mathcal{T}}(I_{r,j_1}^t)$ or $\mathcal{F}^+_{3,\mathcal{T}}(I_{r,j_1-1}^t)\cup\mathcal{F}^+_{3,\mathcal{T}}(I_{r,j_1}^t)$:
		if $j_1<j_0$, then $\mathcal{F}^+(I_0,I_1)$ overflows $\mathcal{F}^+_{3,\mathcal{T}}(I_{r,j_1}^t)$ or $\mathcal{F}^+_{3,\mathcal{T}}(I_{r,j_1-1}^t)\cup\mathcal{F}^+_{3,\mathcal{T}}(I_{r,j_1}^t)$;
		if $j_1=j_0$, then $I_0\subseteq I_{r,j_1-1}^t$ and hence $\mathcal{F}^+(I_0,I_1)$ overflows $\mathcal{F}^+_{3,\mathcal{T}}(I_{r,j_1}^t)$ or $\mathcal{F}^+_{3,\mathcal{T}}(I_{r,j_1-1}^t)\cup\mathcal{F}^+_{3,\mathcal{T}}(I_{r,j_1}^t)$.
		Thus
		\begin{equation}
			\label{e7121}\mathcal{W}^+(I_0,I_1)\leq\left\{
			\begin{matrix}
				\mathcal{W}^+_{3,\mathcal{T}}(I_{r,j_1}^t),&j_1=1\\
				\mathcal{W}^+_{3,\mathcal{T}}(I_{r,j_1-1}^t)+\mathcal{W}^+_{3,\mathcal{T}}(I_{r,j_1}^t),&j_1>1
			\end{matrix}\right.=O_{C}(1).
		\end{equation}
		
		Let $L$ be the complement interval between $I_0$ and $I_1$ such that
		$I_0<L<I_1$. By (\ref{e2.10}) and (\ref{e7121}),
		applying Lemma \ref{l20} to $I_0$, $L$ and $I_1$,
		we have that there exist two intervals $I_2',I_2''\subseteq L$ with $I_0<I_2'<I_2''<I_1$, $|I_2'|_{\partial D}<\frac{1}{R_{C,M}}{\rm dist}_{\partial D}(I_2',I_0)$, $|I_2''|_{\partial D}<\frac{1}{R_{C,M}}{\rm dist}_{\partial D}(I_2'',I_1)$ such that
		$$\mathcal{W}_r(I_0^+,I_2'^+)\geq2\mathcal{W}_r(I_0^+,I_1^+)+O_{C,M}(1)\ {\rm or}\
		\mathcal{W}_r(I_2''^+,I_1^+)\geq2\mathcal{W}_r(I_0^+,I_1^+)+O_{C,M}(1).$$
		If $\mathcal{W}_r(I_0^+,I_2'^+)\geq2\mathcal{W}_r(I_0^+,I_1^+)+O_{C,M}(1)$, then we set $I_2:=I_2'$; if $\mathcal{W}_r(I_2''^+,I_1^+)\geq2\mathcal{W}_r(I_0^+,I_1^+)+O_{C,M}(1)$, then we set $I_2:=I_2''$.
		Thus we have
		$$\mathcal{W}_r(I_0^+,I_2^+)\succeq(\frac{3}{2})^2K\ {\rm and}\
		|I_2|_{\partial D}<\frac{1}{R_{C,M}}{\rm dist}_{\partial D}(I_0,I_2)$$
		or
		$$\mathcal{W}_r(I_2^+,I_1^+)\succeq(\frac{3}{2})^2K\ {\rm and}\
		|I_2|_{\partial D}<\frac{1}{R_{C,M}}{\rm dist}_{\partial D}(I_1,I_2).$$
		By the second inequalities of these two formulas respectively, similar to (\ref{e7121}), we have
		$$\mathcal{W}^+(I_0,I_2)\preceq_{C,M}1$$
		or
		$$\mathcal{W}^+(I_2,I_1)\preceq_{C,M}1.$$
		
		Similarly, we can obtain that for all $n\geq1$,
		\begin{align}
			\label{e2.2}\mathcal{W}_r(I_{j_{n,1}}^+,I_n^+)\succeq(\frac{3}{2})^nK\ {\rm and}\
			|I_n|_{\partial D}<\frac{1}{R_{C,M}}{\rm dist}_{\partial D}(I_{j_{n,1}},I_n)
		\end{align}
		or
		\begin{align}
			\label{e2.211}\mathcal{W}_r(I_n^+,I_{j_{n,2}}^+)\succeq(\frac{3}{2})^nK\ {\rm and}\
			|I_n|_{\partial D}<\frac{1}{R_{C,M}}{\rm dist}_{\partial D}(I_n,I_{j_{n,2}})
		\end{align}
		for some two nonnegative integers $j_{n,1}(<n)$ and $j_{n,2}(<n)$.
		Moreover, we have that for all $n\geq1$,
		$$\mathcal{W}^+(I_{j_{n,1}},I_n)\preceq_{C,M}1\ {\rm(Thanks\ to\ (\ref{e2.2}))}$$
		or
		$$\mathcal{W}^+(I_n,I_{j_{n,2}})\preceq_{C,M}1\ {\rm(Thanks\ to\ (\ref{e2.211}))}.$$
		
		Since $D$ is a quasidisk, there exists a quasiconformal map $\phi$ from $\hat{\mathbb{C}}$ to itself mapping $D$ to $\hat{\mathbb{C}}\setminus\overline{\mathbb{D}}$ and $\partial D$ to the unit circle $\mathbb{S}^1$. Then for all $j\geq0$,
		$\phi(I_j)$ are intervals in $\mathbb{S}^1$, say, arcs in $\mathbb{S}^1$.
		For any intervals $I,J$ in $\mathbb{S}^1$, we denote by $\mathcal{F}_{\mathbb{D}}(I,J)$ the family of all curves in $\hat{\mathbb{C}}$ connecting $I$ and $J$; we denote by $\mathcal{F}^-_{\mathbb{D}}(I,J)$ the family of all curves in $\mathbb{D}$ connecting $I$ and $J$. Extremal widths of $\mathcal{F}_{\mathbb{D}}(I,J)$ and $\mathcal{F}^-_{\mathbb{D}}(I,J)$ are written as $\mathcal{W}_{\mathbb{D}}(I,J)$ and $\mathcal{W}^-_{\mathbb{D}}(I,J)$ respectively.
		The symmetry principle gives
		\begin{equation}
			\label{e2.1}\mathcal{W}_{\mathbb{D}}(I,J)=2\mathcal{W}^-_{\mathbb{D}}(I,J).
		\end{equation} 
		It is easy to see that
		$$\phi(\mathcal{F}^+(I_{j_{n,1}},I_n))=\mathcal{F}^-_{\mathbb{D}}(\phi(I_{j_{n,1}}),\phi(I_n)),\ \phi(\mathcal{F}(I_{j_{n,1}},I_n))=\mathcal{F}_{\mathbb{D}}(\phi(I_{j_{n,1}}),\phi(I_n)),$$
		$$\phi(\mathcal{F}^+(I_n,I_{j_{n,2}}))=\mathcal{F}^-_{\mathbb{D}}(\phi(I_n),\phi(I_{j_{n,2}}))\ {\rm and}\ \phi(\mathcal{F}(I_n,I_{j_{n,2}}))=\mathcal{F}_{\mathbb{D}}(\phi(I_n),\phi(I_{j_{n,2}})).$$
		Thus
		$$\mathcal{W}(\phi(\mathcal{F}^+(I_{j_{n,1}},I_n)))=\mathcal{W}^-_{\mathbb{D}}(\phi(I_{j_{n,1}}),\phi(I_n)),\ \mathcal{W}(\phi(\mathcal{F}(I_{j_{n,1}},I_n)))=\mathcal{W}_{\mathbb{D}}(\phi(I_{j_{n,1}}),\phi(I_n)),$$
		$$\mathcal{W}(\phi(\mathcal{F}^+(I_n,I_{j_{n,2}})))=\mathcal{W}^-_{\mathbb{D}}(\phi(I_n),\phi(I_{j_{n,2}}))\ {\rm and}\ \mathcal{W}(\phi(\mathcal{F}(I_n,I_{j_{n,2}})))=\mathcal{W}_{\mathbb{D}}(\phi(I_n),\phi(I_{j_{n,2}})).$$
		Since
		for all $n\geq1$,
		$$\mathcal{W}^+(I_{j_{n,1}},I_n)\preceq_{C,M}1\ {\rm(Thanks\ to\ (\ref{e2.2}))}$$
		or
		$$\mathcal{W}^+(I_n,I_{j_{n,2}})\preceq_{C,M}1\ {\rm(Thanks\ to\ (\ref{e2.211}))},$$
		we have that
		$$\{\mathcal{W}^-_{\mathbb{D}}(\phi(I_{j_{n,1}}),\phi(I_n))\}_{n\geq1}\ {\rm(Thanks\ to\ (\ref{e2.2}))}$$
		or
		$$\{\mathcal{W}^-_{\mathbb{D}}(\phi(I_n),\phi(I_{j_{n,2}}))\}_{n\geq1}\ {\rm(Thanks\ to\ (\ref{e2.211}))}$$
		are bounded. By (\ref{e2.1}), we have that
		$$\{\mathcal{W}_{\mathbb{D}}(\phi(I_{j_{n,1}}),\phi(I_n))\}_{n\geq1}\ {\rm(Thanks\ to\ (\ref{e2.2}))}$$
		or
		$$\{\mathcal{W}_{\mathbb{D}}(\phi(I_n),\phi(I_{j_{n,2}}))\}_{n\geq1}\ {\rm(Thanks\ to\ (\ref{e2.211}))}$$
		are bounded. It follows that
		$$\{\mathcal{W}(\phi(\mathcal{F}(I_{j_{n,1}},I_n)))\}_{n\geq1}\ {\rm(Thanks\ to\ (\ref{e2.2}))}$$
		or
		$$\{\mathcal{W}(\phi(\mathcal{F}(I_n,I_{j_{n,2}})))\}_{n\geq1}\ {\rm(Thanks\ to\ (\ref{e2.211}))}$$
		are bounded. Thus
		$$\{\mathcal{W}(\mathcal{F}(I_{j_{n,1}},I_n))\}_{n\geq1}\ {\rm(Thanks\ to\ (\ref{e2.2}))}$$
		or
		$$\{\mathcal{W}(\mathcal{F}(I_n,I_{j_{n,2}}))\}_{n\geq1}\ {\rm(Thanks\ to\ (\ref{e2.211}))}$$
		are bounded. 
		This contradicts with the first inequality of (\ref{e2.2}) or (\ref{e2.211}).

	\end{proof}
	
	Let $D$ be a closed quasidisk and $\mathcal{T}$ be a nest of tilings of $\partial D$ with at least two intervals of level $-1$ having post-$(C,M)$-bounded inner and outer geometries. Let $\gamma$ be a geodesic with endpoints contained in ${\rm EP}(\mathcal{T}_{-1})$ with respect to the hyperbolic metric on ${\rm int}(D)$. The geodesic $\gamma$ decomposes $D$ into two closed quasidisks $D_1$ and $D_2$ such that $D=D_1\cup D_2$ and $D_1\cap D_2=\gamma$.
	Let $\mathcal{T}^1$ and $\mathcal{T}^2$ be the restrictions of $\mathcal{T}$ to $\overline{\partial D_1\setminus\gamma}$ and $\overline{\partial D_2\setminus\gamma}$ respectively.
	
	\begin{lemma}
		\label{l0}There is a nest of tilings $\mathcal{T}^3$ on $\gamma$ and a constant $\eta_{C,M}$ determined by $C, M$ such that both $\mathcal{T}^1\cup\mathcal{T}^3$ and $\mathcal{T}^2\cup\mathcal{T}^3$ have post-$(\eta_{C,M},M)$-bounded inner and outer geometries.
		Moreover, if $\mathcal{T}^1$ {\rm(resp. $\mathcal{T}^2$)} has at most $M$ intervals of level $-1$, then $\mathcal{T}^1\cup\mathcal{T}^3$ {\rm(resp. $\mathcal{T}^2\cup\mathcal{T}^3$)} has $(\eta_{C,M},2M)$-bounded inner and outer geometries.
	\end{lemma}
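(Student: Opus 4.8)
The plan is to reduce everything to essentially bounded geometry, then build $\mathcal{T}^3$ by ``orthogonally projecting'' the tiling of one side onto the geodesic $\gamma$, and finally verify the two bounded‐geometry properties by transporting the relevant extremal widths into $\mathrm{int}(D)$, where the bounded geometry of $\mathcal{T}$ is available. First I would invoke Lemma~\ref{l12}: since $\mathcal{T}$ has post‐$(C,M)$‐bounded inner and outer geometries it has essentially $C'$‐bounded geometry for some $C'=C'(C,M)$, and combining this with Lemma~\ref{l7131} (exactly as in the proof of Lemma~\ref{l12}) produces a constant $l_{C,M}>1$ so that any two adjacent intervals of $\mathcal{T}$ of the same level, and every interval against its parent, have combinatorial lengths comparable within a factor $l_{C,M}$ (with respect to any base point). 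Then I set up the conformal picture: fix a Riemann map $\phi:\mathrm{int}(D)\to\mathbb{D}$, so that $\hat\gamma:=\phi(\gamma)$ is a circular arc meeting $\mathbb{S}^1$ orthogonally at $\hat p:=\phi(p)$ and $\hat q:=\phi(q)$, and post‐compose with a Möbius map $\mathbb{D}\to\mathbb{H}$ sending $\hat p\mapsto 0$, $\hat q\mapsto\infty$; we may assume $\hat\gamma$ becomes the positive imaginary axis, $\phi(A_1)=\mathbb{R}_{>0}$, $\phi(A_2)=\mathbb{R}_{<0}$, so that $H_1:=\phi(\mathrm{int}(D_1))$ and $H_2:=\phi(\mathrm{int}(D_2))$ are the two open quadrants, each a Jordan bigon with both interior angles equal to $\pi/2$ (this is precisely where we use that $\gamma$ is a hyperbolic geodesic). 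The maps $z\mapsto z^2$ and $z\mapsto -z^2$ uniformize $H_1$ and $H_2$ onto $\mathbb{H}$; under them $\gamma$, $A_1$, $A_2$ become $\mathbb{R}_{<0}$, $\mathbb{R}_{>0}$, $\mathbb{R}_{<0}$ respectively, so $\gamma$ acquires essentially the same parametrization from both sides (up to the reflection $x\mapsto-x$), which is what will make $\mathcal{T}^3$ ``look the same'' from $D_1$ and $D_2$.

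To construct $\mathcal{T}^3$: for each level $n$ write $\phi(\mathrm{EP}(\mathcal{T}^1_n)\setminus\{p,q\})=\{t^n_j\}_j\subset\mathbb{R}_{>0}$ and set $\mathrm{EP}(\mathcal{T}^3_n):=\phi^{-1}(\{\,it^n_j\,\}_j)\cup\{p,q\}$; equivalently $\mathcal{T}^3$ is the image of $\mathcal{T}^1$ under the nearest‐point hyperbolic projection $\pi_{\hat\gamma}$ of $\phi(A_1)$ onto $\hat\gamma$, which in the model is the rotation $t\mapsto it$. (If it is $\mathcal{T}^2$, not $\mathcal{T}^1$, that has at most $M$ intervals of level $-1$, one projects $\mathcal{T}^2$ instead; by the symmetry just noted this changes nothing below.) One checks routinely that $\mathcal{T}^3$ is a genuine nest of tilings on $\gamma$: its mesh tends to $0$ because $\pi_{\hat\gamma}$ followed by $\phi^{-1}$ is a homeomorphism $A_1\setminus\{p,q\}\to\gamma\setminus\{p,q\}$ and $\mathrm{EP}(\mathcal{T}^1_n)$ is eventually dense in $A_1$, and every interval splits into at least two at level $+2$ because $\mathcal{T}^1$ does; and $\mathcal{T}^3$ has post‐$M$‐bounded combinatorics, being a monotone image of $\mathcal{T}^1$. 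Moreover $\mathcal{T}^3_{-1}$ has exactly as many intervals as $\mathcal{T}^1_{-1}$, so when the latter is $\le M$ the union $\mathcal{T}^1\cup\mathcal{T}^3$ has $\le 2M$ intervals of level $-1$ and refinement rate $\le M$, which covers the combinatorial content of the ``moreover'' clause; the validity of $\mathcal{T}^1\cup\mathcal{T}^3$ (resp. $\mathcal{T}^2\cup\mathcal{T}^3$) as a nest of tilings on $\partial D_1$ (resp. $\partial D_2$) is immediate since $\mathcal{T}^1$ and $\mathcal{T}^3$ refine individually.

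The core is to bound the inner and outer geometries. The first key point is unconditional: $\mathcal{T}^3$ has essentially $O_{C,M}(1)$‐bounded inner geometry with respect to both $D_1$ and $D_2$. Indeed, in the $z\mapsto\pm z^2$ models the endpoints of $\mathcal{T}^3_n$ become $\{\mp(t^n_j)^2\}$ on $\mathbb{R}_{\mp}$, and $(t^n_{j+1})^2-(t^n_j)^2=(t^n_{j+1}-t^n_j)(t^n_{j+1}+t^n_j)$; Step~1 gives that consecutive $\mathcal{T}^1_n$‐intervals $[t^n_j,t^n_{j+1}]$ are comparable within $l_{C,M}$, and this also forces $t^n_{j+1}/t^n_j\le 1+l_{C,M}$ (an interval is never too fat relative to its distance to $\hat p$, since the preceding interval already has comparable length), whence consecutive lengths $(t^n_{j+1})^2-(t^n_j)^2$ are comparable within $O_{C,M}(1)$, so the second bullet of Lemma~\ref{l7131} yields the claim on both sides at once. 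The second key point is a transfer principle: for an interval $I$ of $\mathcal{T}^1$ on $A_1$, a curve realising the inner (resp. outer) $3$‐width with respect to $D_1$ either already lies in $\mathrm{int}(D)$ and reaches $\partial D$, or reaches $\gamma$ and continues through $\mathrm{int}(D_2)$ (resp. through $\hat{\mathbb{C}}\setminus D=(\hat{\mathbb{C}}\setminus D_1)\setminus\mathrm{int}(D_2)$ glued across $A_2$); comparing with $\mathcal{W}^{\pm}_{3,\mathcal{T}}(I)\le C'$ via overflowing, and absorbing the curves that would land on the two level‐$n$ intervals of $\mathcal{T}$ adjacent to $p,q$ (a family of width $O_{C,M}(1)$ by Lemma~\ref{l7131}, those being far from $I$ and comparable to their neighbours), one gets a bound $\eta_{C,M}$. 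The part of each competing curve lying in $\mathrm{int}(D_2)$ is controlled by the inner geometry of $\mathcal{T}^3$ (or of $\mathcal{T}^2\cup\mathcal{T}^3$ near $p,q$) with respect to $D_2$, which we already have. The remaining cases — $I\subset\gamma$, or $I\subset A_1$ adjacent to an endpoint — are handled by the same transfer together with the comparability $|P^1_n|\asymp|P^2_n|$ of the two $\mathcal{T}_n$‐intervals sharing the endpoint $p$ (and likewise at $q$), which in the model is $t^n_1\asymp u^n_1$ with $\{-u^n_k\}=\phi(\mathrm{EP}(\mathcal{T}^2_n)\setminus\{p,q\})$.

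I expect the main obstacle to be the interlocking of the inner‑ and outer‑geometry estimates across the two sides — bounding the outer geometry of $\mathcal{T}^1\cup\mathcal{T}^3$ relative to $D_1$ requires the inner geometry of $\mathcal{T}^2\cup\mathcal{T}^3$ relative to $D_2$, and conversely — which is exactly what the unconditional bound on the inner geometry of $\mathcal{T}^3$ along $\gamma$ (first key point) is designed to break. That bound, and the companion fact that the inclusion $\mathrm{int}(D_1)\hookrightarrow\mathrm{int}(D)$ (equivalently the passage $t\mapsto t^2$) and the orthogonal projection $t\mapsto it$ each distort adjacent‑interval length ratios by at most a factor depending only on $C,M$, are the genuine technical heart; once they are in place, the overflowing arguments, the finitely many boundary cases near $p,q$, and the level‑$(-1)$ bookkeeping are routine, and one reads off $\eta_{C,M}$ as the largest of the constants produced.
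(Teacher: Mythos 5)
Your approach is structurally the same as the paper's (it conjugates to a half--plane model, folds $\mathcal{T}^1$ onto $\gamma$, applies an explicit conformal map of the half--piece onto a standard domain, and then runs a case analysis for the outer geometry; your map $z\mapsto z^2$ from the quadrant to $\mathbb{H}$ is, up to M\"obius conjugation, the same as the paper's explicit rational map $T:\mathbb{D}^+\to\mathbb{D}$, and your Cases for $\mathcal{W}^+_{3,\mathcal{T}^1\cup\mathcal{T}^3}(I)$ match the paper's Cases 1--4). However, the heart of your ``first key point'' has a genuine gap. You assert that ``Step~1 gives that consecutive $\mathcal{T}^1_n$--intervals $[t^n_j,t^n_{j+1}]$ are comparable within $l_{C,M}$'' as \emph{Euclidean} intervals in $\mathbb{R}_{>0}$, and you derive $t^n_{j+1}/t^n_j\leq 1+l_{C,M}$ and the comparability of the squared gaps from it. But what Lemma~\ref{l7131} and essential bounded geometry actually deliver is comparability of \emph{combinatorial} lengths $|\cdot|_{\partial D}$, which in your half--plane picture are (for any fixed base point, say $i$) of the form $\arctan(t^n_{j+1})-\arctan(t^n_j)$, not $t^n_{j+1}-t^n_j$. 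These two notions are not comparable uniformly: the last interval $[t^n_{k-1},\infty]$ on $\mathbb{R}_{>0}$ has infinite Euclidean length but finite combinatorial length, and more generally the M\"obius distortion factor $\sim(1+t^2)^{-1}$ makes Euclidean gaps near $\infty$ blow up relative to combinatorial gaps. Thus ``Euclidean comparability'' is false as stated, the inference $t^n_{j+1}/t^n_j\leq 1+l_{C,M}$ is unfounded (indeed $\arctan$--comparability does not bound $t_{j+1}/t_j$ when $t_j$ is large), and the step in which you conclude that consecutive lengths $(t^n_{j+1})^2-(t^n_j)^2$ are comparable before invoking Lemma~\ref{l7131} is not justified. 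The related parenthetical ``(with respect to any base point)'' in your Step~1 is also false: comparability of adjacent combinatorial lengths is not base--point independent at low levels (intervals can contain most of the circle for a badly placed base point).

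The paper avoids exactly this pitfall by staying inside a bounded model: it maps $\mathrm{int}(D)$ to $\mathbb{D}$, fixes the residual M\"obius freedom by the normalization~(\ref{e7141}) (balancing the two level-$n_0$ intervals flanking $\gamma$), defines $\mathcal{T}^3$ via the angular map $h:[-1,1]\to S^+$ (so that lengths on $[-1,1]$ are literally $\tfrac{2}{\pi}$ times the corresponding arc--lengths on $S^+$, i.e.\ the combinatorial lengths on $\partial D$), and then verifies the distortion of $T:\mathbb{D}^+\to\mathbb{D}$ directly via the elementary but nontrivial estimate~(\ref{e10}), which precisely quantifies how $T$ transforms a three--point ratio $\frac{c-b}{a-b}$. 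That estimate is the genuine technical content your proposal replaces by an assertion; to repair your argument you would need an analogous distortion bound for your conformal map of the quadrant that is stated in terms of the correct (combinatorial) lengths and handles the endpoint $q\leftrightarrow\infty$ uniformly, together with an explicit choice of base point doing the job of~(\ref{e7141}). Your bookkeeping of the level-$(-1)$ count for the ``Moreover'' clause, and the outer--geometry overflow arguments, do match the paper's once the inner--geometry bound is in place.
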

	\begin{proof}
		Without loss of generality, we assume that the number of intervals of level $-1$ of $\mathcal{T}^1$ is less than or equal to that of $\mathcal{T}^2$.
		Let $n_0$ be the lowest level of $\mathcal{T}^1$ having at least two intervals. We denote by $I^{n_0}$ and $J^{n_0}$ two intervals of level $n_0$ of $\mathcal{T}^1$ adjacent to $\gamma$.
		Let $\phi$ be a conformal map from ${\rm int}(D)$ to the unit disk $\mathbb{D}$ such that $\phi$ maps $\gamma$ to the real interval $(-1,1)$, maps $D_1$ to the upper half unit disk and maps $D_2$ to the lower half unit disk, and the Euclidean lengths satisfy 
		\begin{equation}
			\label{e7141}|\phi(I^{n_0})|_{\rm Euc}=|\phi(J^{n_0})|_{\rm Euc}.
		\end{equation}
		We denote also by $\phi$ the homeomorphism extension of $\phi$ to $D$.
		Then the image $\phi(\mathcal{T})$ has post-$(C,M)$-bounded inner geometry with respect to $\mathbb{D}$. By the definition of $\phi$, we have that $\phi(\mathcal{T}^1)$ is a nest of tilings of the upper half unit circle $S^+$ and $\phi(\mathcal{T}^2)$ is a nest of tilings of the lower half unit circle $S^-$.
		
		We define a homeomorphism
		$$h:[-1,1]\to S^+, x\mapsto e^{-i\pi(x-1)/2}.$$
		The inverse image $\mathcal{T}^{3,*}:=h^{-1}(\phi(\mathcal{T}^1))$ is a nest of tilings of $[-1,1]$. Set $\mathcal{T}^3:=\phi^{-1}(\mathcal{T}^{3,*})$, which is a nest of tilings of $\gamma$.
		It follows easily from the definition of $\mathcal{T}^3$ that both $\mathcal{T}^1\cup\mathcal{T}^3$ and $\mathcal{T}^2\cup\mathcal{T}^3$ have post-$M$-bounded combinatorics.
		
		Next we prove that both $\mathcal{T}^1\cup\mathcal{T}^3$ and $\mathcal{T}^2\cup\mathcal{T}^3$ have essentially bounded inner and outer geometries.
		We only give the proof for the case $\mathcal{T}^1\cup\mathcal{T}^3$, while the case $\mathcal{T}^2\cup\mathcal{T}^3$ is similar.
		To do it, we firstly prove that $\mathcal{T}^{3,*}\cup\phi(\mathcal{T}^1)$ (resp. $\mathcal{T}^{3,*}\cup\phi(\mathcal{T}^2)$) has essentially bounded inner geometry for the upper half unit disk (resp. the lower half unit disk).
		We denote by $\mathbb{D}^+$ and $\mathbb{D}^-$ the upper half unit disk and the lower half unit disk respectively.
		We set
		$$T(z):=\frac{(\frac{z+1}{1-z})^2-i}{(\frac{z+1}{1-z})^2+i}=\frac{(z+1)^2-i(1-z)^2}{(z+1)^2+i(1-z)^2}.$$
		The rational function $T$ conformally maps $\mathbb{D}^+$ to $\mathbb{D}$ and can extend to a homeomorphism from $\overline{\mathbb{D}^+}$ to $\overline{\mathbb{D}}$.
		Given any pairwise different complex numbers $a,b,c\in\overline{\mathbb{D}^+}$, a simple computation gives
		$$\left|\frac{T(c)-T(b)}{T(a)-T(b)}\right|=\left|\frac{c-b}{a-b}\right|\cdot\left|\frac{1-bc}{1-ba}\right|\cdot\left|\frac{(a+1)^2+i(a-1)^2}{(c+1)^2+i(c-1)^2}\right|.$$
		Observe that for any $a,c\in\overline{\mathbb{D}^+}$, we have
		\begin{align*}
			8&\geq|(a+1)^2+i(a-1)^2|\\
			&=|(1+i)(a-i+\sqrt{2}i)(a-i-\sqrt{2}i)|\\
			&\geq\sqrt{2}\cdot(\sqrt{2}-1)\cdot\sqrt{2}\\
			&=2(\sqrt{2}-1)
		\end{align*}
		and
		\begin{align*}
			8&\geq|(c+1)^2+i(c-1)^2|\\
			&=|(1+i)(c-i+\sqrt{2}i)(c-i-\sqrt{2}i)|\\
			&\geq\sqrt{2}\cdot(\sqrt{2}-1)\cdot\sqrt{2}\\
			&=2(\sqrt{2}-1).
		\end{align*}
		Then we have
		$$\left|\frac{(a+1)^2+i(a-1)^2}{(c+1)^2+i(c-1)^2}\right|\asymp1.$$
		Let
		$b=|b|e^{i\alpha_1}$ ($0\leq\alpha_1\leq\pi$) and
		$a=|a|e^{i\alpha_2}$ ($0\leq\alpha_2\leq\pi$). Then
		\begin{align*}
			|1-ba|&=|1-|a||b|e^{i(\alpha_1+\alpha_2)}|\\
			&=\left|1-|a||b|\cos(\alpha_1+\alpha_2)-i|a||b|\sin(\alpha_1+\alpha_2)\right|\\
			&=
			\sqrt{1+|a|^2|b|^2-2|a||b|\cos(\alpha_1+\alpha_2)}\\
			&\geq
			\sqrt{1+|a|^2|b|^2-2|a||b|\cdot|\cos\alpha_1|}\\
			&=\left\{\begin{matrix}
				\sqrt{|b|^2\left(|a|-\frac{|\cos\alpha_1|}{|b|}\right)^2+\sin^2\alpha_1},&|b|\not=0\\1,&|b|=0\end{matrix}\right.\\
			&\geq\left\{\begin{matrix}|\sin\alpha_1|,&|b|\geq|\cos\alpha_1|\\\min\{|1-b|,|1+b|\},&|b|<|\cos\alpha_1|\end{matrix}\right..
		\end{align*}
		Observe that if $|b|\geq|\cos\alpha_1|$, then we have
		\begin{align*}
			\min\{|1-b|,|1+b|\}&=\sqrt{1+|b|^2-2|b|\cdot|\cos\alpha_1|}\\
			&\leq\sqrt{2-2|\cos\alpha_1|}\\
			&=\left\{\begin{matrix}2\sin\frac{\alpha_1}{2},&0\leq\alpha_1\leq\frac{\pi}{2}\\2\cos\frac{\alpha_1}{2},&\frac{\pi}{2}<\alpha_1\leq\pi\end{matrix}\right.\\
			&\leq\sqrt{2}|\sin\alpha_1|.
		\end{align*}
		Thus
		$$|1-ba|\geq\min\left\{\frac{|1-b|}{\sqrt{2}},\frac{|1+b|}{\sqrt{2}}\right\}.$$
		We set $k=\frac{c-b}{a-b}$. Then we have
		$$\left|\frac{1-bc}{1-ba}\right|=\left|k+(1-k)\frac{1-b^2}{1-ba}\right|\leq|k|+|1-k|\cdot\sqrt{2}(1+|b|)\leq5(1+|k|).$$
		Similarly, we have
		$$\left|\frac{1-ba}{1-bc}\right|\leq5(1+\frac{1}{|k|}).$$
		Then
		$$\frac{1}{5(1+\frac{1}{|k|})}\leq\left|\frac{1-bc}{1-ba}\right|\leq5(1+|k|).$$
		Thus
		\begin{equation}
			\label{e10}\frac{|k|^2}{5(|k|+1)}\preceq\left|\frac{T(c)-T(b)}{T(a)-T(b)}\right|\preceq5|k|(1+|k|).
		\end{equation}
		
		For any four different intervals $I_1<I_2<I_3<I_4$ of level $n$ of	$\mathcal{T}^{3,*}\cup\phi(\mathcal{T}^1)$ such that $I_2$ is exactly the complement interval between $I_1$ and $I_3$, by Lemma \ref{l7131} and (\ref{e7141}),
		the Euclidean lengths satisfy $\min\{|I_2|_{\rm Euc},|[3I_2]^c|_{\rm Euc}\}\preceq_C\min\{|I_1|_{\rm Euc},|I_3|_{\rm Euc}\}$.
		Then (\ref{e10}) gives $\min\{|T(I_2)|_{\rm Euc},|T([3I_2]^c)|_{\rm Euc}\}\preceq_C\min\{|T(I_1)|_{\rm Euc},|T(I_3)|_{\rm Euc}\}$. Applying Lemma \ref{l7131} to $T(\mathcal{T}^{3,*}\cup\phi(\mathcal{T}^1))$, we have that the nest of tilings $T(\mathcal{T}^{3,*}\cup\phi(\mathcal{T}^1))$ has essentially bounded inner geometry with respect to $\mathbb{D}$, and hence $\mathcal{T}^{3,*}\cup\phi(\mathcal{T}^1)$ has essentially bounded inner geometry with respect to $\mathbb{D}^+$.
		Similarly, by Lemma \ref{l7131} and (\ref{e10}) $\mathcal{T}^{3,*}\cup\phi(\mathcal{T}^2)$ has essentially bounded inner geometry with respect to $\mathbb{D}^-$.
		Thus we have that
		\begin{itemize}
			\item[(a)] $\mathcal{T}^1\cup\mathcal{T}^3$ has essentially bounded inner geometry with respect to $D_1$;
			\item[(b)] $\mathcal{T}^2\cup\mathcal{T}^3$ has essentially bounded inner geometry with respect to $D_2$.
		\end{itemize}
		
		Next we prove that $\mathcal{T}^1\cup\mathcal{T}^3$ has essentially bounded outer geometry with respect to $D_1$. For any interval $I$ of level $n\geq-1$ of $\mathcal{T}^1\cup\mathcal{T}^3$, we consider 
		$\mathcal{W}^+_{3,\mathcal{T}^1\cup\mathcal{T}^3}(I)$.
		
		\noindent{\bf Case 1:} If $I$ belongs to $\mathcal{T}^1$ and has no common endpoint with $\gamma$, then $\mathcal{F}_{3,\mathcal{T}^1\cup\mathcal{T}^3}^+(I)$ overflows $\mathcal{F}^+_{3,\mathcal{T}}(I)$ and hence $\mathcal{W}^+_{3,\mathcal{T}^1\cup\mathcal{T}^3}(I)\leq\mathcal{W}^+_{3,\mathcal{T}}(I)\preceq_C1$.
		
		\noindent{\bf Case 2:} If $I$ belongs to $\mathcal{T}^3$ and has no common endpoint with $\gamma$, then $\mathcal{F}_{3,\mathcal{T}^1\cup\mathcal{T}^3}^+(I)$ overflows $\mathcal{F}^-_{3,\mathcal{T}^2\cup\mathcal{T}^3}(I)$ and hence by (b) we have $\mathcal{W}^+_{3,\mathcal{T}^1\cup\mathcal{T}^3}(I)\leq\mathcal{W}^-_{3,\mathcal{T}^2\cup\mathcal{T}^3}(I)\preceq_C1$.
		
		\noindent{\bf Case 3:} If $I$ belongs to $\mathcal{T}^1$ and has a common endpoint with $\gamma$, then $\mathcal{F}_{3,\mathcal{T}^1\cup\mathcal{T}^3}^+(I)$ overflows $\mathcal{F}_{3,\mathcal{T}}(I)\cup\mathcal{F}^-_{3,\mathcal{T}^2\cup\mathcal{T}^3}(I')$, where the interval $I'$ of $\mathcal{T}^2$, adjacent to $I$, has the same level as $I$. Thus by Lemma \ref{l12} and (b) we have $$\mathcal{W}^+_{3,\mathcal{T}^1\cup\mathcal{T}^3}(I)\leq\mathcal{W}_{3,\mathcal{T}}+\mathcal{W}^-_{3,\mathcal{T}^2\cup\mathcal{T}^3}(I')\preceq_{C,M}1.$$
		\noindent{\bf Case 4:} If $I$ belongs to $\mathcal{T}^3$ and has a common endpoint with $\gamma$, then $\mathcal{F}_{3,\mathcal{T}^1\cup\mathcal{T}^3}^+(I)$ overflows $\mathcal{F}^-_{3,\mathcal{T}^2\cup\mathcal{T}^3}(I)\cup\mathcal{F}^-_{3,\mathcal{T}^2\cup\mathcal{T}^3}(I'')\cup\mathcal{F}_{3,\mathcal{T}}(I'')\cup\mathcal{F}^-_{3,\mathcal{T}^2\cup\mathcal{T}^3}(I''')$, where
		the intervals $I'', I'''$ of $\mathcal{T}^2$ has the same level as $I$ and satisfy
		\begin{itemize}
			\item $I''$ is adjacent to $I$;
			\item $I'''$ is adjacent to $I''$.
		\end{itemize}
		Thus by Lemma \ref{l12} and (b) we have $$\mathcal{W}^+_{3,\mathcal{T}^1\cup\mathcal{T}^3}(I)\leq\mathcal{W}^-_{3,\mathcal{T}^2\cup\mathcal{T}^3}(I)+\mathcal{W}^-_{3,\mathcal{T}^2\cup\mathcal{T}^3}(I'')+\mathcal{W}_{3,\mathcal{T}}(I'')+\mathcal{W}^-_{3,\mathcal{T}^2\cup\mathcal{T}^3}(I''')\preceq_{C,M}1.$$
		Thus for any interval $I$ of $\mathcal{T}^1\cup\mathcal{T}^3$, Cases $1-4$ give $\mathcal{W}^+_{3,\mathcal{T}^1\cup\mathcal{T}^3}(I)\preceq_{C,M}1.$ This implies that there is a constant $\eta_{C,M}$ determined by $C, M$ such that $\mathcal{T}^1\cup\mathcal{T}^3$ has post-$(\eta_{C,M},M)$-bounded inner and outer geometries. %If $\mathcal{T}^1$ has at most $M$ intervals of level $-1$, then $\mathcal{T}^1\cup\mathcal{T}^3$ has $(\eta_{C,M},M)$-bounded inner and outer geometries.
		Moreover, if $\mathcal{T}^1$ {\rm(resp. $\mathcal{T}^2$)} has at most $M$ intervals of level $-1$, then $\mathcal{T}^1\cup\mathcal{T}^3$ {\rm(resp. $\mathcal{T}^2\cup\mathcal{T}^3$)} has $(\eta_{C,M},2M)$-bounded inner and outer geometries.
	\end{proof}
	
	\subsection{Pseudo-Siegel disks.\label{s2.3}}
	Assume that $\alpha=[a_1,a_2,\cdots]$ is an eventually golden-mean rotation number, that is, there exists a positive integer $j_0$ such that
	$a_j=1$ for all $j\geq j_0$. Let $\frac{p_n}{q_n}$ ($q_n>0, p_n$ are integers) be the $n-$th rational approximation of $\alpha$. Then
	$$\frac{p_n}{q_n}=[a_1,a_2,\cdots,a_n].$$
	Due to the Douady-Ghys surgery, the quadratic polynomial $P_{\alpha}(z)=e^{2\pi i\alpha}z+z^2$ has a Siegel disk $\Delta_{\alpha}$ centering at $0$ whose boundary is a quasicircle containing a critical point.
	
	For $n\geq-1$, we denote by ${\rm CP}_n$ the set of critical points of $f^{\comp q_{n+1}}$. The diffeo-tiling $\mathfrak{D}_n(\alpha)$ of level $n$ is the partition of $\partial\Delta_{\alpha}$ induced by ${\rm CP}_n$: every interval in $\mathfrak{D}_n(\alpha)$ is the closure of a component of $\partial\Delta_{\alpha}\setminus{\rm CP}_n$. For $n=-1$, the tilings consists of a single ``interval''.
	
	%Let $\mathfrak{D}_n(\alpha)$ be the diffeo-tiling of level $n$.
	We assign each $I_n\in\mathfrak{D}_n(\alpha)$ a geodesic $\tau_n(I_n)$ with respect to the hyperbolic metric on $\hat{\mathbb{C}}\setminus\overline{\Delta_{\alpha}}$ such that two endpoints of $\tau_n(I_n)$ are in $I_n\cap{\rm CP}_{n+1}\setminus{\rm CP}_n$. Let $F_n$ be the filling-in of $$\partial\Delta_{\alpha}\cup\left(\bigcup_{I_n\in\mathfrak{D}_n(\alpha)}\tau_n(I_n)\right),$$
	that is the union of $\partial\Delta_{\alpha}\cup\left(\bigcup_{I_n\in\mathfrak{D}_n(\alpha)}\tau_n(I_n)\right)$
	and all bounded components of $$\mathbb{C}\setminus\left(\partial\Delta_{\alpha}\cup\left(\bigcup_{I_n\in\mathfrak{D}_n(\alpha)}\tau_n(I_n)\right)\right).$$
	Such $F_n$ is called a geodesic pre-filling-in of $\Delta_{\alpha}$ of level $n$ associated with $\tau_n$.
	
	A nest $\{\hat{\Delta}_{\alpha}^n\}_{n\geq-1}$ is called a geodesic filling-in of $\Delta_{\alpha}$ if
	\begin{itemize}
		\item $\hat{\Delta}_{\alpha}^n=\overline{\Delta_{\alpha}}$ for $n\gg1$;
		\item for all $n\geq0$,
		$$\hat{\Delta}_{\alpha}^{n-1}=\hat{\Delta}_{\alpha}^n\ {\rm or}\ \hat{\Delta}_{\alpha}^{n-1}=\hat{\Delta}_{\alpha}^n\cup F_{n-1},$$
		where $F_{n-1}$ is a geodesic pre-filling-in of $\Delta_{\alpha}$ of level $n-1$.
	\end{itemize}
	Each $\hat{\Delta}_{\alpha}^n$ is called a geodesic filling-in of $\Delta_{\alpha}$ of level $n$.
	It is easy to see that $\hat{\Delta}_{\alpha}^n$ is a closed quasidisk.
	
	In \cite{DL}, Dudko and Lyubich proved that there exists a geodesic filling-in of $\Delta_{\alpha}$: $\{\hat{\Delta}_{\alpha}^n\}_{n\geq-1}$ such that the following properties hold:
	\begin{itemize}
		\item[(1)] {\bf Injectivity:} for all $n\geq-1$, $f^{\comp q_{n+1}}$ is univalent on ${\rm int}(\hat{\Delta}_{\alpha}^n)$; (refer to Assumption $6$ and Lemma 4.6 of \cite{DL} for details)
		\item[(2)] {\bf Uniform a priori bounds:} there exist universal constants $C, M$ (not depending on $\alpha$) such that for all $n\geq-1$, there exists a nest of tilings $\mathcal{T}=\{\mathcal{T}_k\}_{k\geq-1}$ of $\partial\hat{\Delta}_{\alpha}^n$ with post-$(C,M)$-bounded inner and outer geometries; moreover,
		there is an one-to-one correspondence between intervals of $\mathcal{T}_{-1}$ and intervals of  $\mathfrak{D}_n(\alpha)$ such that each interval in $\mathcal{T}_{-1}$ has the same endpoints as the corresponding interval in $\mathfrak{D}_n(\alpha)$ and they are homotopic in $\mathbb{C}^*$. (refer to Section $11.4$ of \cite{DL} for details)
	\end{itemize}
	%A pseudo-Siegel disk $\hat{\Delta}_{\alpha}^n$ of level $n$ is obtained from $\overline{\Delta_{\alpha}}$ by filling-in deep parts of parabolic fjords of levels $\geq n$.
	In this case, each such $\hat{\Delta}_{\alpha}^n$ is called a pseudo-Siegel disk of level $n$. For any $I_n(\alpha)\in\mathfrak{D}_n(\alpha)$, the corresponding interval in $\mathcal{T}_{-1}$ is called the projection of $I_n(\alpha)$ on $\partial\hat{\Delta}_{\alpha}^n$.
	
	Let $\hat{\Delta}_{\alpha}^n$ be a pseudo-Siegel disk of level $n\geq1$. For any $I_n(\alpha)\in\mathfrak{D}_n(\alpha)$,
	we let $\hat{I}_n(\alpha)$ be the projection of $I_n(\alpha)$ on $\partial\hat{\Delta}_{\alpha}^n$ and
	$\gamma_n(\alpha)$ be the geodesic connecting two endpoints of $\hat{I}_n(\alpha)$
	with respect to the hyperbolic metric on ${\rm int}(\hat{\Delta}_{\alpha}^n)$.
	Then we have the following lemma.
	\begin{lemma}
		\label{L1}There is an absolute constant $\bf{K}$ $>1$ such that
		for any $I_n(\alpha)\in\mathfrak{D}_n(\alpha)$, the closed curve $\hat{I}_n(\alpha)+\gamma_n(\alpha)$ is a $\bf{K}$-quasicircle.
	\end{lemma}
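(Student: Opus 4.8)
The plan is to realize the closed curve $\hat I_n(\alpha)+\gamma_n(\alpha)$ as the boundary of one of the two quasidisks into which the geodesic $\gamma_n(\alpha)$ cuts the pseudo-Siegel disk $\hat\Delta_\alpha^n$, and then to control that piece by equipping its boundary with the nest of tilings furnished by Lemma~\ref{l0} and invoking Lemma~\ref{l11}.

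First I would put $D:=\hat\Delta_\alpha^n$ and let $\mathcal T=\{\mathcal T_k\}_{k\geq-1}$ be the nest of tilings of $\partial D$ supplied by property~(2) of Subsection~\ref{s2.3}; it has post-$(C,M)$-bounded inner and outer geometries for the \emph{universal} constants $C,M$, which depend on nothing (in particular not on $\alpha$, $n$, or the chosen interval). By the one-to-one correspondence in property~(2), the projection $\hat I_n(\alpha)$ is exactly one of the intervals of $\mathcal T_{-1}$, so its two endpoints lie in ${\rm EP}(\mathcal T_{-1})$. Since $n\geq1$, the set ${\rm CP}_n$ of critical points of $f^{\comp q_{n+1}}$ meets $\partial\Delta_\alpha$ in at least two points (indeed in $q_{n+1}\geq q_2\geq2$ of them), so $\mathfrak D_n(\alpha)$, hence $\mathcal T_{-1}$, has at least two intervals; thus $\hat I_n(\alpha)$ is a \emph{proper} subarc of $\partial D$ with two distinct endpoints, and the geodesic $\gamma_n(\alpha)$ in ${\rm int}(D)$ joining them is a well-defined simple arc.

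Next I would invoke the cutting construction stated just before Lemma~\ref{l0}: $\gamma_n(\alpha)$ splits $D$ into two closed quasidisks $D_1$ and $D_2$ with $D_1\cap D_2=\gamma_n(\alpha)$, and I would label them so that $\partial D_1=\hat I_n(\alpha)+\gamma_n(\alpha)$. Then $\overline{\partial D_1\setminus\gamma_n(\alpha)}=\hat I_n(\alpha)$ is a single interval of $\mathcal T_{-1}$, so the restriction $\mathcal T^1$ of $\mathcal T$ to $\overline{\partial D_1\setminus\gamma_n(\alpha)}$ has exactly one interval of level $-1$, in particular at most $M$ (recall $M>1$). Lemma~\ref{l0} then produces a nest of tilings $\mathcal T^3$ on $\gamma_n(\alpha)$ such that $\mathcal T^1\cup\mathcal T^3$ is a nest of tilings of $\partial D_1$ with $(\eta_{C,M},2M)$-bounded inner and outer geometries, the constant $\eta_{C,M}$ depending only on $C$ and $M$.

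Finally I would apply Lemma~\ref{l11} to the closed quasidisk $D_1$ equipped with $\mathcal T^1\cup\mathcal T^3$: this gives that $D_1$ is a $K_{\eta_{C,M},2M}$-quasidisk, that is, $\partial D_1=\hat I_n(\alpha)+\gamma_n(\alpha)$ is a $K_{\eta_{C,M},2M}$-quasicircle. As $C$ and $M$ are universal, so are $\eta_{C,M}$ and $K_{\eta_{C,M},2M}$, and setting $\mathbf K:=K_{\eta_{C,M},2M}$ finishes the proof. I do not expect a genuine analytic obstacle at this stage: the heavy lifting is all in Lemmas~\ref{l0} and~\ref{l11} (the former resting in turn on Lemma~\ref{l12}), and the single point that needs care is the combinatorial observation that $\hat I_n(\alpha)$ is a level-$(-1)$ interval of $\mathcal T$ — it is exactly this that makes $\mathcal T^1$ have one level-$(-1)$ interval, triggers the ``moreover'' clause of Lemma~\ref{l0} with the explicit factor $2M$, and so yields an \emph{absolute} $\mathbf K$ rather than an $\alpha$-dependent one.
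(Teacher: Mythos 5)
Your proposal is correct and follows essentially the same route as the paper: restrict the tiling $\mathcal{T}$ from property (2) to $\hat I_n(\alpha)$, note it has a single level-$(-1)$ interval so the ``moreover'' clause of Lemma~\ref{l0} yields $(\eta_{C,M},2M)$-bounded geometry on the piece cut off by $\gamma_n(\alpha)$, and conclude via Lemma~\ref{l11}. The extra checks you include (that $\mathcal{T}_{-1}$ has at least two intervals and that the constants are universal) are consistent with, and slightly more explicit than, the paper's argument.
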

	\begin{proof}
		Let $\mathcal{T}$ be the nest of tilings of $\partial\hat{\Delta}_{\alpha}^n$ as described in (2).
		Let $\mathcal{T}^1$ be the restriction of $\mathcal{T}$ to $\hat{I}_n(\alpha)$. Then by Lemma \ref{l0} there exists a nest of tilings $\mathcal{T}^3$ on $\gamma_n(\alpha)$ and a constant $\eta_{C,M}$ determined by $C,M$ such that $\mathcal{T}^1\cup\mathcal{T}^3$ has post-($\eta_{C,M},M$)-bounded inner and outer geometries. Moreover,
		since $\mathcal{T}^1$ has only one interval of level $-1$, we have that
		$\mathcal{T}^1\cup\mathcal{T}^3$ has ($\eta_{C,M},2M$)-bounded inner and outer geometries. Then the lemma follows immediately from Lemma \ref{l11}.	
	\end{proof}

	\section{The proof of Theorem \ref{T1}}
	Let the continued fraction expansion of $\alpha_n$ be
	$$\alpha_n=[a_1^{(n)},a_2^{(n)},\cdots,a_j^{(n)},\cdots],$$
	where all $a_j^{(n)}$ ($j\geq1$) are positive integers.
	For any positive integer $N$,
	replacing all entries after the $N$-th term by $1$, we obtain
	$$\alpha_n^{(N)}:=[a_1^{(n)},a_2^{(n)},\cdots,a_N^{(n)},1,1,\cdots].$$
	For any positive integer $m$, we let $n(m)$ and $N(m)$ be two positive integers depending on $m$ such that the following properties hold:
	\begin{itemize}
		\item $N(m)\gg m$;
		\item $a_j^{(n(m))}=a_j$ for $1\leq j\leq m+1$;
		\item $\left|P_{\alpha_{n(m)}^{(N(m))}}^{\comp j}(-\frac{e^{2\pi i\alpha_{n(m)}^{(N(m))}}}{2})-P_{\alpha}^{\comp j}(-\frac{e^{2\pi i\alpha}}{2})\right|<\frac{1}{m}$ for all $j\in\{0,1,2,\cdots,q_{m+1}\}$,
		where $q_{m+1}$ is the denominator of the ${\rm(m+1)-th}$ rational approximation of $\alpha$.
	\end{itemize}
	Note that for any $m$, the above conditions will be satisfied as soon as $n(m)$ and $N(m)$ are large enough.
	
	For any nonempty subsets $A$ and $B$ of $\hat{\mathbb{C}}$, we denote by ${\rm dist}_{\rm H}(A,B)$ the Hausdorff semi-distance of $A$ and $B$ with respect to the spherical metric:
	$${\rm dist}_{\rm H}(A,B)=\sup_{x\in A}{\rm dist}_{\hat{\mathbb{C}}}(x,B).$$
	Then we have the following claim:
	
	\vspace{0.2cm}
	\noindent{\bf Claim:}
	$$\sup_{I_m(\alpha_{n(m)}^{(N(m))})\in\mathfrak{D}_m(\alpha_{n(m)}^{(N(m))})}{\rm dist_H}\left(\hat{I}_m(\alpha_{n(m)}^{(N(m))})+\gamma_m(\alpha_{n(m)}^{(N(m))}), \mathcal{O}_{P_{\alpha}}\cup\Delta_{\alpha}\right)\to0\ {\rm as}\ m\to+\infty,$$
	where $\gamma_m(\alpha_{n(m)}^{(N(m))})$ is the hyperbolic geodesic as the same endpoints as $\hat{I}_m(\alpha_{n(m)}^{(N(m))})$ on ${\rm int}(\hat{\Delta}_{\alpha_{n(m)}^{(N(m))}}^m)$ (here $\hat{\Delta}_{\alpha_{n(m)}^{(N(m))}}^m$ is a pseudo-Siegel disk of level $m$).
	%The diameter of $I_m(\alpha_{n(m)}^{(N(m))})+\gamma_m(\alpha_{n(m)}^{(N(m))})$ with respect to the spherical metric converges to $0$ as $m$ goes to $+\infty$.
	
	\vspace{0.4cm}
	\noindent In fact, if the claim does't hold, without loss of generality, we suppose that
	\begin{equation}
		\label{e7181}\lim_{m\to+\infty}{\rm dist_H}\left(\hat{I}_m(\alpha_{n(m)}^{(N(m))})+\gamma_m(\alpha_{n(m)}^{(N(m))}), \mathcal{O}_{P_{\alpha}}\cup\Delta_{\alpha}\right)=A>0.
	\end{equation}
	For large enough $m$, we let $a_m$ and $b_m$ be two endpoints of $I_m(\alpha_{n(m)}^{(N(m))})$. Then $P_{\alpha_{n(m)}^{(N(m))}}^{\comp q_{m+1}}(a_m)$ or $P_{\alpha_{n(m)}^{(N(m))}}^{\comp q_{m+1}}(b_m)$ belongs to $I_m(\alpha_{n(m)}^{(N(m))})$.
	%, where $q_{m+1}$ is the denominator of the ${\rm(m+1)-th}$ rational approximation of $\alpha$.
	Furthermore, we have that $P_{\alpha_{n(m)}^{(N(m))}}^{\comp q_{m+1}}(a_m)$ or $P_{\alpha_{n(m)}^{(N(m))}}^{\comp q_{m+1}}(b_m)$ belongs to $\hat{I}_m(\alpha_{n(m)}^{(N(m))})$. This follows from the following fact:

\vspace{0.2cm}
\noindent {\bf Fact:} \emph{For any interval $I\in\mathfrak{D}_t(\alpha_{n(m)}^{(N(m))})$ {\rm(}$t\geq m${\rm)},
	we decompose $I$ into some intervals in $\mathfrak{D}_{t+1}(\alpha_{n(m)}^{(N(m))})$ as follows: $I=\cup_{j=1}^{k_t}I_j^{t+1}$, where
	$I_j^{t+1}\in\mathfrak{D}_{t+1}(\alpha_{n(m)}^{(N(m))})$ {\rm(}$1\leq j\leq k_t${\rm)} and $I_1^{t+1}<I_2^{t+1}<\cdots<I_{k_t}^{t+1}$.
	If $P_{\alpha_{n(m)}^{(N(m))}}^{\comp j}\left(-\frac{e^{2\pi i\alpha_{n(m)}^{(N(m))}}}{2}\right)$, $1\leq j<q_{m+1}$ belongs to $I$, then $P_{\alpha_{n(m)}^{(N(m))}}^{\comp j}\left(-\frac{e^{2\pi i\alpha_{n(m)}^{(N(m))}}}{2}\right)$ must belong to $I_1^{t+1}\cup I_{k_t}^{t+1}$.}
	
	\vspace{0.2cm}
	\noindent Observe that as $m$ goes to $+\infty$, both $P_{\alpha_{n(m)}^{(N(m))}}^{\comp q_{m+1}}(a_m)$ and $P_{\alpha_{n(m)}^{(N(m))}}^{\comp q_{m+1}}(b_m)$ converge to $\mathcal{O}_{P_{\alpha}}$. Thus
	$$\lim_{m\to+\infty}{\rm dist}_{\hat{\mathbb{C}}}(\hat{I}_m(\alpha_{n(m)}^{(N(m))}),\mathcal{O}_{P_{\alpha}})=0.$$
	This implies that
	$$\liminf_{m\to+\infty}\ {\rm diam}_{\hat{\mathbb{C}}}\left(\hat{I}_m(\alpha_{n(m)}^{(N(m))})+\gamma_m(\alpha_{n(m)}^{(N(m))})\right)\geq A.$$
	It follows from Lemma \ref{L1} that $\hat{I}_m(\alpha_{n(m)}^{(N(m))})+\gamma_m(\alpha_{n(m)}^{(N(m))})$ is a $\bf{K}$-quasicircle.
	Thus without loss of generality, we can suppose that $\hat{I}_m(\alpha_{n(m)}^{(N(m))})+\gamma_m(\alpha_{n(m)}^{(N(m))})$
	converges uniformly to a $\bf{K}$-quasicircle $\Gamma$ with ${\rm diam_{\hat{\mathbb{C}}}}(\Gamma)\geq A$ with respect to the spherical metric as $m\to+\infty$.
	Then the bounded region bounded by $\hat{I}_m(\alpha_{n(m)}^{(N(m))})+\gamma_m(\alpha_{n(m)}^{(N(m))})$
	converges to a Jordan domain $\mathcal{D}_{\Gamma}$ not containing $\infty$ having the boundary $\Gamma$ in the sense of Carathéodory.
	By (1) of Subsection \ref{s2.3} and Hurwitz theorem, we have that for all $t$, the restriction of $P_{\alpha}^{\comp t}$ to $\mathcal{D}_{\Gamma}$ is univalent.
	
	\vspace{0.2cm}
	\noindent{\bf Case 1:} If $\mathcal{D}_{\Gamma}$ intersects the Julia set of $P_{\alpha}$, then
	for large enough $t$, the critical point $c_0\in P_{\alpha}^{\comp t}(\mathcal{D}_{\Gamma})$ and this contradicts the fact that
	$P_{\alpha}^{\comp(t+1)}$ is univalent on $\mathcal{D}_{\Gamma}$.
	
	\vspace{0.2cm}
	\noindent{\bf Case 2:} Assume $\mathcal{D}_{\Gamma}\subseteq P_{\alpha}^{\comp-k}(\Delta_{\alpha})\setminus\Delta_{\alpha}$.
	Then $P_{\alpha}^{\comp k}(\mathcal{D}_{\Gamma})\subseteq\Delta_{\alpha}$.
	We choose an Euclidean ball $\mathbb{B}$ with $0\not\in\overline{\mathbb{B}}\subseteq P_{\alpha}^{\comp k}(\mathcal{D}_{\Gamma})\subseteq\Delta_{\alpha}$.
	Let $\mathbb{B}_{-k}$ be the connected component of $P_{\alpha}^{-k}(\mathbb{B})$ which is contained in $\mathcal{D}_{\Gamma}$.
	For any $n\geq0$, we have $P_{\alpha}^{\comp n}(\mathbb{B})\subseteq\Delta_{\alpha}$ and hence $P_{\alpha}^{\comp n}(\mathbb{B})\cap\mathbb{B}_{-k}=\emptyset$.
	
	We fix a sufficiently large $n_0$ such that there exists an Euclidean ball $\mathbb{E}$ with $\overline{\mathbb{E}}\subseteq P_{\alpha}^{\comp n_0}(\mathbb{B})\cap\mathbb{B}$ satisfying the following conditions:
	\begin{itemize}
		\item $P_{\alpha}^{\comp-k}(P_{\alpha}^{\comp n_0}(\mathbb{B}))$ has a connected component which is compactly contained in $\mathcal{D}_{\Gamma}$;
		\item $P_{\alpha}^{\comp n_0}(\mathbb{E})\subseteq P_{\alpha}^{\comp n_0}(\mathbb{B})\cap\mathbb{B}$;
		\item $\cup_{j=0}^{j=n_0}P_{\alpha}^{\comp j}(\mathbb{E})$ separates $0$ and $\infty$.
	\end{itemize}
	We assume that $m$ is large enough. Then $P_{\alpha_{n(m)}^{(N(m))}}$ satisfies
	\begin{itemize}
		\item[(a)] $P_{\alpha_{n(m)}^{(N(m))}}^{\comp-k}(P_{\alpha_{n(m)}^{(N(m))}}^{\comp n_0}(\mathbb{B}))$ has a connected component which is contained in ${\rm int}(\hat{\Delta}_{\alpha_{n(m)}^{(N(m))}}^m)$;
		\item[(b)] $P_{\alpha_{n(m)}^{(N(m))}}^{\comp n_0}(\mathbb{E})\subseteq P_{\alpha_{n(m)}^{(N(m))}}^{\comp n_0}(\mathbb{B})\cap\mathbb{B}$;
		\item[(c)] $\cup_{j=0}^{j=n_0}P_{\alpha_{n(m)}^{(N(m))}}^{\comp j}(\mathbb{E})$ separates $0$ and $\infty$;
		\item[(d)] $\cup_{j=0}^{j=n_0}P_{\alpha_{n(m)}^{(N(m))}}^{\comp j}(\mathbb{E})\subseteq\Delta_{\alpha}$;
		\item[(e)] $P_{\alpha_{n(m)}^{(N(m))}}^{\comp-k}(\mathbb{B})$ has a connected component which is compactly contained in $\mathcal{D}_{\Gamma}$, written as $\tilde{\mathbb{B}}_{-k}$;
		\item[(f)] $\tilde{\mathbb{B}}_{-k}\subseteq{\rm int}(\hat{\Delta}_{\alpha_{n(m)}^{(N(m))}}^m)$;
		\item[(g)] for any $0\leq j\leq n_0$, $P_{\alpha_{n(m)}^{(N(m))}}^{\comp j}(\mathbb{B})\subseteq\Delta_{\alpha}$ and hence $P_{\alpha_{n(m)}^{(N(m))}}^{\comp j}(\mathbb{B})\cap\tilde{\mathbb{B}}_{-k}=\emptyset$.
	\end{itemize}
	By $(1)$ of Subsection \ref{s2.3}, we have that $P_{\alpha_{n(m)}^{(N(m))}}^{\comp k}$ is univalent on ${\rm int}(\hat{\Delta}_{\alpha_{n(m)}^{(N(m))}}^m)$.
	Then by (f) we have
	$$P_{\alpha_{n(m)}^{(N(m))}}^{\comp-k}(\mathbb{B})\cap{\rm int}(\hat{\Delta}_{\alpha_{n(m)}^{(N(m))}}^m)=\tilde{\mathbb{B}}_{-k}.$$
	We let $\tilde{\mathbb{B}}_{-k,t}$ ($1\leq t\leq2^k-1$) be the other $2^k-1$ connected components of $P_{\alpha_{n(m)}^{(N(m))}}^{-k}(\mathbb{B})$. Then for all $1\leq t\leq2^k-1$,
	$$\tilde{\mathbb{B}}_{-k,t}\cap{\rm int}(\hat{\Delta}_{\alpha_{n(m)}^{(N(m))}}^m)=\emptyset.$$
	Since $P_{\alpha_{n(m)}^{(N(m))}}^{\comp(n_0-k)}(\mathbb{B})\cap\tilde{\mathbb{B}}_{-k}=\emptyset$ (Thanks to (g)) and $P_{\alpha_{n(m)}^{(N(m))}}^{\comp n_0}(\mathbb{B})\cap\mathbb{B}\not=\emptyset$ (Thanks to (b)),
	we have that $P_{\alpha_{n(m)}^{(N(m))}}^{\comp(n_0-k)}(\mathbb{B})\cap\tilde{\mathbb{B}}_{-k,t_0}\not=\emptyset$ for some $1\leq t_0\leq2^k-1$ and hence $P_{\alpha_{n(m)}^{(N(m))}}^{\comp(n_0-k)}(\mathbb{B})\nsubseteq{\rm int}(\hat{\Delta}_{\alpha_{n(m)}^{(N(m))}}^m)$.
	Since $P_{\alpha_{n(m)}^{(N(m))}}^{\comp k}$ is univalent on ${\rm int}(\hat{\Delta}_{\alpha_{n(m)}^{(N(m))}}^m)$, by (a) we have $$P_{\alpha_{n(m)}^{(N(m))}}^{\comp(n_0-k)}(\mathbb{B})\cap{\rm int}(\hat{\Delta}_{\alpha_{n(m)}^{(N(m))}}^m)=\emptyset.$$
	Again, observe $\Delta_{\alpha_{n(m)}^{(N(m))}}\subseteq{\rm int}(\hat{\Delta}_{\alpha_{n(m)}^{(N(m))}}^m)$.
	Thus $\mathbb{B}\cap\Delta_{\alpha_{n(m)}^{(N(m))}}=\emptyset$.
	By (b), we have that
	$$P_{\alpha_{n(m)}^{(N(m))}}^{\comp n_0}(\mathbb{E})\cap\Delta_{\alpha_{n(m)}^{(N(m))}}=\emptyset$$
	and hence
	$$\left(\cup_{j=0}^{j=n_0}P_{\alpha_{n(m)}^{(N(m))}}^{\comp j}(\mathbb{E})\right)\cap\Delta_{\alpha_{n(m)}^{(N(m))}}=\emptyset.$$
	Then by (c) we have that
	$\cup_{j=0}^{j=n_0}P_{\alpha_{n(m)}^{(N(m))}}^{\comp j}(\mathbb{E})$ separates $\Delta_{\alpha_{n(m)}^{(N(m))}}$ and $\infty$.
	This implies that $\cup_{j=0}^{j=n_0}P_{\alpha_{n(m)}^{(N(m))}}^{\comp j}(\mathbb{E})$ separates the critical point of $P_{\alpha_{n(m)}^{(N(m))}}$ and $\infty$.
	This contradicts with (d) for large enough $m$.
	
	\vspace{0.2cm}
	\noindent{\bf Case 3:} Assume $\mathcal{D}_{\Gamma}\subseteq A_{\infty}$,
	where $A_{\infty}$ is the Fatou component of $P_{\alpha}$ containing $\infty$.
	Since the restriction of $P_{\alpha}$ to $A_{\infty}$ is conjugate to $z^2$ on the unit disk,
	it is easy to see that for large enough $t$, $P_{\alpha}^{\comp t}$ is not univalent on $\mathcal{D}_{\Gamma}$.
	Thus we get a contradiction.
	
	\vspace{0.2cm}
	\noindent{\bf Case 4:} Assume $\mathcal{D}_{\Gamma}\subseteq\Delta_{\alpha}$.
	Then $\Gamma\subseteq\overline{\mathcal{D}_{\Gamma}}\subseteq\overline{\Delta_{\alpha}}\subseteq\mathcal{O}_{P_{\alpha}}\cup\Delta_{\alpha}$.
	Since $\hat{I}_m(\alpha_{n(m)}^{(N(m))})+\gamma_m(\alpha_{n(m)}^{(N(m))})$
	converges uniformly to $\Gamma$ with respect to the spherical metric as $m\to+\infty$, we have that 
	$$\lim_{m\to+\infty}{\rm dist_H}\left(\hat{I}_m(\alpha_{n(m)}^{(N(m))})+\gamma_m(\alpha_{n(m)}^{(N(m))}), \mathcal{O}_{P_{\alpha}}\cup\Delta_{\alpha}\right)=0,$$
	which contradicts with (\ref{e7181}).
	
	\vspace{0.2cm}
	\noindent This completes the proof of the claim.
	It follows from the claim that
	\begin{equation}
		\label{e791}{\rm dist_H}\left(\partial\hat{\Delta}^m(\alpha_{n(m)}^{(N(m))}), \mathcal{O}_{P_{\alpha}}\cup\Delta_{\alpha}\right)\to0\ {\rm as}\ m\to+\infty.
	\end{equation}
	Next, we prove
	\begin{equation}
		\label{e792}{\rm dist_H}\left(\hat{\Delta}^m(\alpha_{n(m)}^{(N(m))}), \mathcal{O}_{P_{\alpha}}\cup\Delta_{\alpha}\right)\to0\ {\rm as}\ m\to+\infty.
	\end{equation}
	Let $\widehat{\mathcal{O}_{P_{\alpha}}\cup\Delta_{\alpha}}$ be the filling-in of $\mathcal{O}_{P_{\alpha}}\cup\Delta_{\alpha}$.
	We divide the proof of (\ref{e792}) into the following two steps:
	
	\vspace{0.2cm}
	\begin{itemize}
		\item[{\bf Step 1:}] we prove
		$${\rm dist_H}\left(\hat{\Delta}^m(\alpha_{n(m)}^{(N(m))}), \widehat{\mathcal{O}_{P_{\alpha}}\cup\Delta_{\alpha}}\right)\to0\ {\rm as}\ m\to+\infty;$$
		
		\vspace{0.2cm}
		\item[{\bf Step 2:}] we prove $\widehat{\mathcal{O}_{P_{\alpha}}\cup\Delta_{\alpha}}=\mathcal{O}_{P_{\alpha}}\cup\Delta_{\alpha}.$
	\end{itemize}
	
	\vspace{0.2cm}
	\noindent{\bf Step 1:}
	For any $r>0$, we denote by $\overline{\mathbb{U}}(\widehat{\mathcal{O}_{P_{\alpha}}\cup\Delta_{\alpha}},r)$ the closed $r$-neighborhood of $\widehat{\mathcal{O}_{P_{\alpha}}\cup\Delta_{\alpha}}$.
	Let $W$ be any connected component of $\mathbb{C}\setminus\overline{\mathbb{U}}(\widehat{\mathcal{O}_{P_{\alpha}}\cup\Delta_{\alpha}},r)$.
	We have the following two properties:
	\begin{itemize}
		\item[1)] For any $0<r_1\leq r$, $W$ is contained in some connected component of $\mathbb{C}\setminus\overline{\mathbb{U}}(\widehat{\mathcal{O}_{P_{\alpha}}\cup\Delta_{\alpha}},r_1)$.
		\item[2)] If $W$ is contained in some bounded connected component of $\mathbb{C}\setminus\overline{\mathbb{U}}(\widehat{\mathcal{O}_{P_{\alpha}}\cup\Delta_{\alpha}},r_1)$, then for any $r_2$ with $r_1\leq r_2\leq r$, $W$ is contained in some bounded connected component of $\mathbb{C}\setminus\overline{\mathbb{U}}(\widehat{\mathcal{O}_{P_{\alpha}}\cup\Delta_{\alpha}},r_2)$.
	\end{itemize}

	Next, we prove that for any $\epsilon>0$, there exists an $\epsilon_0$ with $0<\epsilon_0<\frac{\epsilon}{2}$ such that every bounded connected component of $\mathbb{C}\setminus\overline{\mathbb{U}}(\widehat{\mathcal{O}_{P_{\alpha}}\cup\Delta_{\alpha}},\frac{\epsilon}{2})$ is contained in the unbounded connected component of $\mathbb{C}\setminus\overline{\mathbb{U}}(\widehat{\mathcal{O}_{P_{\alpha}}\cup\Delta_{\alpha}},\epsilon_0)$.
	
	In fact, let $V$ be the union of all bounded connected components of $\mathbb{C}\setminus\overline{\mathbb{U}}(\widehat{\mathcal{O}_{P_{\alpha}}\cup\Delta_{\alpha}},\frac{\epsilon}{2})$ and for any point $o\in V$, we have that
	the Euclidean ball $\mathbb{B}(o,\frac{\epsilon}{2})$ centering at $o$ with radius $\frac{\epsilon}{2}$ does't intersect $\widehat{\mathcal{O}_{P_{\alpha}}\cup\Delta_{\alpha}}$.
	Let $o_1$ and $o_2$ be two different points in $V$. Then we have that
	\begin{itemize}
		\item[3)] if $|o_1-o_2|<\frac{\epsilon}{2}$, then $o_1$ and $o_2$ are in the same connected component of $\mathbb{C}\setminus\overline{\mathbb{U}}(\widehat{\mathcal{O}_{P_{\alpha}}\cup\Delta_{\alpha}},\frac{\sqrt{3}\epsilon}{8})$.
	\end{itemize}
	For any connected component $U$ of $\mathbb{C}\setminus\overline{\mathbb{U}}(\widehat{\mathcal{O}_{P_{\alpha}}\cup\Delta_{\alpha}},\frac{\sqrt{3}\epsilon}{8})$ with $U\cap V\not=\emptyset$,
	we take a point $o_U\in U\cap V$. Then
	we have a map $$\tau: U\mapsto\mathbb{B}(o_U,\frac{\epsilon}{4}).$$
	For any two different connected components $U$ and $U'$ of $\mathbb{C}\setminus\overline{\mathbb{U}}(\widehat{\mathcal{O}_{P_{\alpha}}\cup\Delta_{\alpha}},\frac{\sqrt{3}\epsilon}{8})$,
	if $U\cap V\not=\emptyset$ and $U'\cap V\not=\emptyset$, then by ${\rm 3)}$ we have $\tau(U)\cap\tau(U')=\emptyset$.
	Thus there are at most finitely many components of $\mathbb{C}\setminus\overline{\mathbb{U}}(\widehat{\mathcal{O}_{P_{\alpha}}\cup\Delta_{\alpha}},\frac{\sqrt{3}\epsilon}{8})$ intersecting $V$, written as $U_1, U_2,\cdots, U_k$.
	For any $j\in\{1,2,\cdots,k\}$, we claim that there exists an $\epsilon(j)$ with $0<\epsilon(j)<\frac{\sqrt{3}\epsilon}{8}$ such that $U_j$ is contained in the unbounded connected component of $\mathbb{C}\setminus\overline{\mathbb{U}}(\widehat{\mathcal{O}_{P_{\alpha}}\cup\Delta_{\alpha}},\epsilon(j))$. If not, $U_j$ will be contained in a bounded connected component of $\mathbb{C}\setminus\overline{\mathbb{U}}(\widehat{\mathcal{O}_{P_{\alpha}}\cup\Delta_{\alpha}},r)$ for any $0<r<\frac{\sqrt{3}\epsilon}{8}$. This implies that $U_j$
	is contained in a bounded connected component of $\mathbb{C}\setminus\widehat{\mathcal{O}_{P_{\alpha}}\cup\Delta_{\alpha}}$. This contradicts with the fact that $\widehat{\mathcal{O}_{P_{\alpha}}\cup\Delta_{\alpha}}$ is a filling-in.
	Set
	$$\epsilon_0=\min\{\epsilon(1),\epsilon(2),\cdots,\epsilon(k)\}.$$
	Then for any $j\in\{1,2,\cdots,k\}$, $U_j$ is contained in the unbounded connected component of $\mathbb{C}\setminus\overline{\mathbb{U}}(\widehat{\mathcal{O}_{P_{\alpha}}\cup\Delta_{\alpha}},\epsilon_0)$, and hence $V$ is contained in the unbounded connected component of $\mathbb{C}\setminus\overline{\mathbb{U}}(\widehat{\mathcal{O}_{P_{\alpha}}\cup\Delta_{\alpha}},\epsilon_0)$.
	
	By (\ref{e791}), there exists a positive integer $m_0$ such that for all $m\geq m_0$, we have $$\partial\hat{\Delta}^m(\alpha_{n(m)}^{(N(m))})\subseteq\overline{\mathbb{U}}(\widehat{\mathcal{O}_{P_{\alpha}}\cup\Delta_{\alpha}},\epsilon_0).$$
	Thus $\hat{\Delta}^m(\alpha_{n(m)}^{(N(m))})$ is contained in the filling-in of $\overline{\mathbb{U}}(\widehat{\mathcal{O}_{P_{\alpha}}\cup\Delta_{\alpha}},\epsilon_0)$.
	%which the union of $\overline{\mathbb{U}}(\widehat{\mathcal{O}_{P_{\alpha}}\cup\Delta_{\alpha}},\epsilon_0)$ and all connected components of $\mathbb{C}\setminus\overline{\mathbb{U}}(\widehat{\mathcal{O}_{P_{\alpha}}\cup\Delta_{\alpha}},\epsilon_0)$.
	It follows that
	\begin{equation}
		\label{e7101}\hat{\Delta}^m(\alpha_{n(m)}^{(N(m))})\cap V=\emptyset.
	\end{equation}
	
	At last, we will prove that $\hat{\Delta}^m(\alpha_{n(m)}^{(N(m))})\subseteq\overline{\mathbb{U}}(\widehat{\mathcal{O}_{P_{\alpha}}\cup\Delta_{\alpha}},\frac{\epsilon}{2})$ for all $m\geq m_0$.
	%and $\widehat{\mathcal{O}_{P_{\alpha}}\cup\Delta_{\alpha}}$ is less than or equal to $\frac{\epsilon}{2}$.
	In fact, if not, say, there exists a point $b\in\hat{\Delta}^m(\alpha_{n(m)}^{(N(m))})$ such that
	the Euclidean distance between $b$ and $\widehat{\mathcal{O}_{P_{\alpha}}\cup\Delta_{\alpha}}$ is greater than $\frac{\epsilon}{2}$, then
	$$b\in\mathbb{C}\setminus\overline{\mathbb{U}}(\widehat{\mathcal{O}_{P_{\alpha}}\cup\Delta_{\alpha}},\frac{\epsilon}{2})\subseteq\mathbb{C}\setminus\overline{\mathbb{U}}(\widehat{\mathcal{O}_{P_{\alpha}}\cup\Delta_{\alpha}},\epsilon_0)$$
	and $b$ belongs to the filling-in of $\overline{\mathbb{U}}(\widehat{\mathcal{O}_{P_{\alpha}}\cup\Delta_{\alpha}},\epsilon_0)$. Then $b$ belongs to some bounded connected component of $\mathbb{C}\setminus\overline{\mathbb{U}}(\widehat{\mathcal{O}_{P_{\alpha}}\cup\Delta_{\alpha}},\epsilon_0)$.
	It follows from ${\rm 2)}$ that $b\in V$. Thus $b\in V\cap\hat{\Delta}^m(\alpha_{n(m)}^{(N(m))})$, which contradicts with (\ref{e7101}).
	Then we have $\hat{\Delta}^m(\alpha_{n(m)}^{(N(m))})\subseteq\overline{\mathbb{U}}(\widehat{\mathcal{O}_{P_{\alpha}}\cup\Delta_{\alpha}},\frac{\epsilon}{2})$ for all $m\geq m_0$.
	Thus
	$${\rm dist_H}\left(\hat{\Delta}^m(\alpha_{n(m)}^{(N(m))}), \widehat{\mathcal{O}_{P_{\alpha}}\cup\Delta_{\alpha}}\right)\to0\ {\rm as}\ m\to+\infty.$$
	
	\noindent\vspace{0.2cm}
	{\bf Step 2:}
	We prove $\widehat{\mathcal{O}_{P_{\alpha}}\cup\Delta_{\alpha}}=\mathcal{O}_{P_{\alpha}}\cup\Delta_{\alpha}$ by contradiction.
	We suppose that there exists a nonempty bounded connected component of $\mathbb{C}\setminus(\mathcal{O}_{P_{\alpha}}\cup\Delta_{\alpha})$, written as $W_1$. Then $W_1$ is a Fatou component and $\partial W_1\subseteq \mathcal{O}_{P_{\alpha}}$.
	By Sullivan no wandering theorem, there exists a positive integer $t$ such that
	$P_{\alpha}^{\comp t}(W_1)=\Delta_{\alpha}$ and hence $P_{\alpha}^{\comp t}(\partial W_1)=\partial\Delta_{\alpha}$.
	It follows from [Theorem $11.2$ or its proof, \cite{DL}] that $P_{\alpha}$ is injective on $\mathcal{O}_{P_{\alpha}}$. Since $P_{\alpha}(\mathcal{O}_{P_{\alpha}})=\mathcal{O}_{P_{\alpha}}$, we have $P_{\alpha}^{\comp t}$ is also injective on $\mathcal{O}_{P_{\alpha}}$.
	Observe that $P_{\alpha}^{\comp t}(\partial\Delta_{\alpha})=\partial\Delta_{\alpha}$ and $\partial\Delta_{\alpha}\subseteq\mathcal{O}_{P_{\alpha}}$.
	Thus we have $\partial W_1=\partial\Delta_{\alpha}$.
	We take a point $o\in\partial W_1=\partial\Delta_{\alpha}$ that is not in the grand orbit of the critical point $c_0$. Then $P_{\alpha}^{\comp t}(o)\in\partial\Delta_{\alpha}$.
	Since $P_{\alpha}^{\comp t}$ is injective on $\mathcal{O}_{P_{\alpha}}\supseteq\partial\Delta_{\alpha}=\partial W_1$, we have that
	$$P_{\alpha}^{-t}(P_{\alpha}^{\comp t}(o))\cap\partial\Delta_{\alpha}=P_{\alpha}^{-t}(P_{\alpha}^{\comp t}(o))\cap\partial W_1=\{o\}.$$
	We take a sequence $\{z_j\}_{j=1}^{\infty}$ of complex numbers such that
	$z_j\in\Delta_{\alpha}, \forall j$ and $\lim_{j\to\infty}z_j=P_{\alpha}^{\comp t}(o)$.
	Then there exist $\{x_j\}_{j=1}^{\infty}\subseteq W_1$ and $\{y_j\}_{j=1}^{\infty}\subseteq\Delta_{\alpha}$ such that $P_{\alpha}^{\comp t}(x_j)=P_{\alpha}^{\comp t}(y_j)=z_j, \forall j$ and
	$\lim_{j\to\infty}x_j=\lim_{j\to\infty}y_j=o$.
	This implies that $o$ is a critical point of $P_{\alpha}^{\comp t}$,
	which contradicts with the fact that $o$ is not in the grand orbit of $c_0$.
	\begin{remark}
	{\rm In the proof of {\bf Step 2}, we use the result in \cite{DL}: $P_{\alpha}$ is injective on $\mathcal{O}_{P_{\alpha}}$. In fact, [Theorem $11.2$, \cite{DL}]
	told us that $P_{\alpha}$ is a homeomorphism on a Mother Hedgehog, by which a simpler proof of {\bf Step 2} can be given.}
	\end{remark}
	
	\vspace{0.2cm}
	It follows from {\bf Steps 1} and {\bf 2} that
	$${\rm dist_H}\left(\hat{\Delta}^m(\alpha_{n(m)}^{(N(m))}), \mathcal{O}_{P_{\alpha}}\cup\Delta_{\alpha}\right)\to0\ {\rm as}\ m\to+\infty.$$
	Since $\mathcal{O}_{P_{\alpha_{n(m)}^{(N(m))}}}\subseteq\hat{\Delta}^m(\alpha_{n(m)}^{(N(m))})$, we have that 
	$${\rm dist_H}\left(\mathcal{O}_{P_{\alpha_{n(m)}^{(N(m))}}}, \mathcal{O}_{P_{\alpha}}\cup\Delta_{\alpha}\right)\to0\ {\rm as}\ m\to+\infty.$$
	By the arbitrariness of $N(m)$ ($\gg m$), we can get
	$${\rm dist_H}\left(\mathcal{O}_{P_{\alpha_{n(m)}}}, \mathcal{O}_{P_{\alpha}}\cup\Delta_{\alpha}\right)\to0\ {\rm as}\ m\to+\infty.$$
	Then the first half part of the theorem follows immediately from the arbitrariness of $n(m)$ ($\gg m$).
	At last, by the same way as deriving (\ref{e792}) from (\ref{e791}), we can obtain the last half part of the theorem from the first half part.


\begin{thebibliography}{15}
		\bibitem{BC}
		Buff, X. and Ch\'eritat, A.: Quadratic Julia sets with positive area. Ann. of Math. (2012): 673-746.
		\bibitem{Br}
		Bruno, A. D.: Analytic forms of differential equations. Trans. Mosc. Math. Soc. {\bf 25} (1971).
		\bibitem{CD13} Cheraghi, D.: Typical orbits of quadratic polynomials with a neutral fixed point: Brjuno type. Comm. Math. Phys. 322, no. 3, (2013): 999–1035.
		\bibitem{CD} Cheraghi, D.: Typical orbits of quadratic polynomials with a neutral fixed point: non-Brjuno type. Ann. Sci. Éc. Norm. Supér.(4) 52.1 (2019): 59-138.
		\bibitem{CD22} Cheraghi, D.: Topology of irrationally indifferent attractors. arXiv preprint arXiv:1706.02678, 2017.
		\bibitem{DL} Dudko, D. and Lyubich, M.: Uniform a priori bounds for neutral renormalization. arXiv preprint arXiv:2210.09280, 2022.
		\bibitem{IS}
		Inou, H. and Shishikura, M.: The renormalization for parabolic fixed points and their perturbation. preprint (2006).
		%\bibitem{Shi98}
		%Shishikura, M.: The Hausdorff dimension of the boundary of the Mandelbrot set and Julia sets. Annals of Mathematics (1998): 225-267.
		\bibitem{PZ}
		Petersen, C. L. and Zakeri, S.: On the Julia set of a typical quadratic polynomial with a Siegel disk, Ann. of Math., (2004): 1–52.
		\bibitem{SY}
		Shishikura, M. and Yang, F.: The high type quadratic Siegel disks are Jordan domains. J. Eur. Math. Soc. (2024), published online first.
		\bibitem{Yoc95}
		Yoccoz, J. C.: Petits diviseurs en dimension $1$, Ast\'erisque {\bf 231}, Soc. Math. France, Paris, (1995). MR 1367354. Zbl 0836.30001.
	\end{thebibliography}
\end{document}